\documentclass[11pt]{article}

\usepackage[T1]{fontenc}
\usepackage[utf8]{inputenc}
\usepackage{lmodern}
\usepackage{amsmath,amssymb,amsthm,mathtools}
\usepackage[a4paper,margin=2.6cm]{geometry}
\usepackage{microtype}
\usepackage{enumitem}
\usepackage[hidelinks]{hyperref}

\newtheorem{theorem}{Theorem}[section]
\newtheorem{lemma}[theorem]{Lemma}
\newtheorem{proposition}[theorem]{Proposition}
\newtheorem{corollary}[theorem]{Corollary}
\theoremstyle{remark}
\newtheorem{remark}[theorem]{Remark}

\newcommand{\Pp}{\mathbb P}
\newcommand{\Ee}{\mathbb E}
\newcommand{\R}{\mathcal R}
\newcommand{\cE}{\mathcal E}
\newcommand{\1}{\mathbf 1}
\newcommand{\wh}{\widehat}
\newcommand{\Root}{\mathcal R_q}
\newcommand{\dd}{\,\mathrm d}
\newcommand{\norm}[1]{\left\lVert #1\right\rVert}

\title{Punctured Wilson--Evolving Sets and Root Identities for Massive Kirchhoff Forests}
\author{Nordine Anis Moumeni\\[0.4em]
\small Aix-Marseille Universit\'e, CNRS, Centrale Marseille, I2M, UMR 7373, Marseille, France\\
\small\texttt{nordine.moumeni@univ-amu.fr}}
\date{}

\begin{document}
\maketitle

\begin{abstract}
We construct a punctured Wilson--evolving-set coupling for finite reversible Markov
chains. The transformed evolving set carries a Markov trajectory up to its exit from a
punctured domain, whereas the ordinary evolving set gives the corresponding probabilities
after de-biasing. Applied to Wilson's algorithm with exponential killing, the final-point
projection yields the root law and an ordered factorization of the probability that several
prescribed vertices are roots. We identify this factorization with a probabilistic
Schur-complement decomposition of the known determinantal root formula. The survival
projection yields an evolving-set representation of hitting probabilities before killing.
This representation gives a quantitative consequence which does not follow from the root
process alone. On graphs of polynomial growth satisfying a Gaussian heat-kernel upper
bound, in dimension larger than four, we obtain exponential localization at scale
$q^{-1/2}$ for two-point forest connectivity, up to the natural finite-volume correction,
and the bound $\mathbb E[|C_q(x)|]\leq Cq^{-2}$. Dirichlet eigenvalues in successively
punctured domains also give product bounds for prescribed root events. We record a
determinant-free Poisson-type concentration bound for the number of roots, while making
explicit that the determinantal description gives the sharper Bernoulli decomposition.
The complete graph is computed exactly and discrete tori, bottleneck graphs and the
hypercube are treated as examples.
\end{abstract}

\section{Introduction and main results}

Let $V$ be a finite set and let $P$ be an irreducible Markov kernel on $V$, reversible
with respect to a strictly positive measure $\pi$.

Set $ L=I-P$ and for any $t\geq 0,$ put $ P_t=e^{-tL}.$
Thus the probabilistic generator of the continuous-time chain is $-L=P-I$. We write
$(X_t)_{t\geq0}$ for this chain. Throughout the paper, $q>0$ is fixed and $T_q$ denotes
an exponential time of parameter $q$, independent of all chains and all evolving sets.

Denote $c$ the conductance defined by $ c(x,y):=\pi(x)P(x,y)=\pi(y)P(y,x)$ for all $x,y$ in $V$.
For a rooted spanning forest $F$, let $\R(F)$ denote its root set. The massive
Kirchhoff-forest measure considered in this paper is
\begin{equation}\label{eq:forest-law}
  \Pp_q(F)
  =\frac{1}{Z_q}
  \left(\prod_{\{x,y\}\in F}c(x,y)\right)
  \left(\prod_{r\in\R(F)}q\pi(r)\right).
\end{equation}
Equivalently, let $p_F(x)$ be the parent of $x$, the weight is proportional
to
\begin{equation*}
  q^{|\R(F)|}\prod_{x\notin\R(F)}P\bigl(x,p_F(x)\bigr),
\end{equation*}
Therefore \eqref{eq:forest-law} is sampled by
Wilson's algorithm in which the continuous-time $P$-chain is killed at rate $q$.

We denote the resulting forest by $F_q$ and its root set by $\Root$. For $x\in V$, we
write $\rho_q(x)$ for the root of the component containing $x$, and
$B_q(r):=\{x\in V:\rho_q(x)=r\}$
for the basin of the root $r$. We write $x\leftrightarrow_q y$ when $x$ and $y$ belong
to the same component of $F_q$.

The point of the paper is that root identities, hitting identities and forest estimates are
projections of the same probabilistic object. For every non-empty $D\subset V$, let
$(S_t^D)_{t\geq0}$ be the evolving set associated with the chain killed when it exits
$D$. The process is started from a singleton unless another initial set is specified. The
precise construction is given in Section~\ref{sec:construction}. Throughout, for all $x$ in $V$, $\Pp_x$ and $\Ee_x$ refer to the Markov chain started from $x$, whereas
$\Pp_{\{x\}}$ and $\Ee_{\{x\}}$ refer to the evolving-set process started from the
singleton $\{x\}$.

Our first result concerns roots.

\begin{theorem}[Root law and multi-root factorization]\label{thm:intro-roots}
Let $x,r\in V$. If $(S_t)_{t\geq0}$ is the ordinary evolving set in $V$ started from
$S_0=\{x\}$, then, we find :
\begin{equation}\label{eq:intro-root-law}
  \Pp\bigl(\rho_q(x)=r\bigr)
  =\frac{\pi(r)}{\pi(x)}
  \Pp_{\{x\}}\bigl(r\in S_{T_q}\bigr).
\end{equation}
Consequently, one gets:
\begin{equation}\label{eq:intro-basin-law}
  \Ee\bigl[\pi(B_q(r))\bigr]=\pi(r).
\end{equation}
More generally, let $x_1,\ldots,x_k$ be distinct vertices of $V$ and set
\begin{equation*}
  A_0=\emptyset,
  \quad \forall i \in [[1,k]]\,,\quad 
  A_i=\{x_1,\ldots,x_i\},
  \quad \text{and}\quad 
  D_{i-1}=V\setminus A_{i-1}.
\end{equation*}
Then, one finds:
\begin{equation}\label{eq:intro-multiroot}
  \Pp(x_1,\ldots,x_k\in\Root)
  =\prod_{i=1}^k
  \Pp^{D_{i-1}}_{\{x_i\}}
  \bigl(x_i\in S_{T_q}^{D_{i-1}}\bigr).
\end{equation}
\end{theorem}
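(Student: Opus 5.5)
Three ingredients carry the argument: the Wilson-algorithm description of $F_q$, the evolving-set duality for the (killed) continuous-time chain, and reversibility. For the root law, run Wilson's algorithm with the first walk started at $x$. The first loop-erased path starts at $x$ and ends at the position of the chain at the killing time. Hence $\rho_q(x)=X_{T_q}$ under $\Pp_x$, and
\[
\Pp\bigl(\rho_q(x)=r\bigr)=\Pp_x(X_{T_q}=r)=\int_0^\infty qe^{-qt}P_t(x,r)\dd t .
\]
I will then use the basic evolving-set identity of Section~\ref{sec:construction}, in the Diaconis--Fill / Morris--Peres form adapted to continuous time:
\[
P_t(x,y)=\frac{\pi(y)}{\pi(x)}\Pp_{\{x\}}(y\in S_t).
\]
This identity comes from the martingale property of $\pi(S_t)$ together with the uniform-threshold construction, which makes $\Pp(y\in S_t\mid S_{t-})$ equal to the transition mass into $y$. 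Integrating it against the independent time $T_q$ gives \eqref{eq:intro-root-law}.

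For \eqref{eq:intro-basin-law}, I write $\Ee[\pi(B_q(r))]=\sum_x\pi(x)\Pp(\rho_q(x)=r)=\sum_x\pi(x)\Pp_x(X_{T_q}=r)$. Reversibility gives $\pi(x)P_t(x,r)=\pi(r)P_t(r,x)$, so the sum equals $\pi(r)\sum_x\Pp_r(X_{T_q}=x)=\pi(r)$. Equivalently, summing \eqref{eq:intro-root-law} over $x$ and using the reversibility of the evolving-set kernel gives the same result. I will present the direct Markov-chain version because it is shorter.

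For \eqref{eq:intro-multiroot}, I use the freedom in Wilson's algorithm to choose the order of starting vertices, and I start successively at $x_1,\dots,x_k$. Consider the event $x_1,\dots,x_{i-1}\in\Root$. On this event, after the first $i-1$ steps the explored tree set is exactly $A_{i-1}$: each walk $x_j$ was killed before leaving $x_j$, because $x_j$ is a root only if its walk is killed before it hits the current tree, and its branch is then $\{x_j\}$ alone. Conversely, if some earlier $x_j$ is not a root, the event already fails. Given the past, the event $x_i\in\Root$ is therefore the event that the walk from $x_i$ is killed before hitting $A_{i-1}$ and its loop erasure ends at $x_i$. This means the killed chain at time $T_q$ sits at $x_i$ without having exited $D_{i-1}$. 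Using the strong Markov property and the independence of successive walks, the conditional probability is
\[
\Pp_{x_i}\bigl(X_{T_q}=x_i,\ T_q<\tau_{A_{i-1}}\bigr)=\int_0^\infty qe^{-qt}P^{D_{i-1}}_t(x_i,x_i)\dd t,
\]
where $P^{D}$ is the sub-Markov semigroup killed on exiting $D$. Applying the punctured version of the evolving-set identity with $y=x$ makes the prefactor $\pi(x_i)/\pi(x_i)$ equal to $1$, and this gives the $i$-th factor. Multiplying the conditional probabilities completes the argument.

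The main obstacle is the punctured duality $P^D_t(x,y)=\frac{\pi(y)}{\pi(x)}\Pp^D_{\{x\}}(y\in S^D_t)$ for the killed chain. There $\pi(S^D_t)$ is only a supermartingale, and the construction must handle the mass lost at $\partial D$. I will take this identity from the construction in Section~\ref{sec:construction} as the de-biased projection of the punctured coupling. If it is not stated there, I will prove it by checking that both sides satisfy the same Kolmogorov forward equation, whose generator is $-(L)$ restricted to $D$ with Dirichlet conditions, and have the same initial condition $\1_{x=y}$.
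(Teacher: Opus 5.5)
Your proposal is correct and follows essentially the paper's proof. You use $\rho_q(x)=X_{T_q}$ from Wilson's algorithm started at $x$, the punctured intertwining (Lemma~\ref{lem:punctured-intertwining}) averaged at $T_q$, and the chain rule for Wilson's algorithm run in the order $x_1,\dots,x_k$ with successive domains $D_{i-1}$. The paper phrases these steps through the marked point of the transformed evolving set, which reduces to your killed-semigroup identity, and your reversibility argument for \eqref{eq:intro-basin-law} is its invariance argument. One small wording fix: on $\{x_j\in\Root\}$ the walk from $x_j$ need not be killed before leaving $x_j$; it need only be killed at $x_j$ before hitting $A_{j-1}$, possibly after excursions. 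That still makes its loop-erasure $\{x_j\}$, and it is the event you actually compute with $P^{D_{j-1}}_t(x_j,x_j)$.
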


The order on the right-hand side of Equality (\ref{eq:intro-multiroot}) is the Wilson exploration order and the event on the
left-hand side is symmetric. Different orders give different punctured factorizations of
the same probability.

The set of roots is already known to be determinantal, with kernel
$K_q=q(qI+L)^{-1}$; see~\cite{AvenaGaudilliere2013,AvenaGaudilliere2018}.
In Section~\ref{sec:roots}, we identify the factorization in Equality \eqref{eq:intro-multiroot} with the corresponding Schur-complement factorization and
derive spectral and isoperimetric bounds which are uniform in the successive punctured
domains.

The second projection of the punctured Wilson--evolving-set object gives hitting before
killing. For $U\subset V$, put
$\tau_U:=\inf\{t\geq0:X_t\in U\}$ and
$h_q(x,U):=\Pp_x(\tau_U\leq T_q)$, with $h_q(x,U)=1$ for $x\in U$. For a singleton,
we write $h_q(x,y)$. If $L_x$ is the loop-erased Wilson branch started from $x$, then
the same projection gives the following forest bounds.

\begin{theorem}[Hitting before killing and forest connectivity]\label{thm:intro-hitting}
Let $U\subset V$ and $x\notin U$. Then
\begin{equation}\label{eq:intro-hitting-identity}
  h_q(x,U)
  =
  1-\frac{1}{\pi(x)}
  \Ee^{V\setminus U}_{\{x\}}
  \left[\pi\left(S_{T_q}^{V\setminus U}\right)\right].
\end{equation}
Moreover, for every $x,y\in V$, we get:
\begin{equation}\label{eq:intro-branch-hitting}
  \Pp(y\in L_x)\leq h_q(x,y).
\end{equation}

\begin{equation}\label{eq:intro-connectivity}
  \Pp(x\leftrightarrow_q y)
  \leq
  \sum_{z\in V}h_q(x,z)h_q(y,z).
\end{equation}
\end{theorem}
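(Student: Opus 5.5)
We prove the three assertions separately. The main input is the duality identity for evolving sets of a killed chain. This identity is constructed in Section~\ref{sec:construction}, and we take it as given: for a domain $D$ and $z\in D$,
\begin{equation*}
  \Pp_{\{x\}}^{D}\bigl(z\in S_t^D\bigr)=\frac{\pi(x)}{\pi(z)}\,\Pp_x\bigl(X_t=z,\ t<\tau_{V\setminus D}\bigr).
\end{equation*}
This is the punctured version of the Diaconis--Fill identity. It follows from the martingale property of $\pi(S_t^D)$ together with the transition rule $\Pp(z\in S_{t+\dd t}\mid S_t)=Q(S_t,z)/\pi(z)$, where the killed kernel makes mass lost at the exit. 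Summing over $z\in D$ gives
\begin{equation*}
  \Ee_{\{x\}}^D\bigl[\pi(S_t^D)\bigr]=\pi(x)\,\Pp_x\bigl(t<\tau_{V\setminus D}\bigr).
\end{equation*}

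\textbf{Hitting identity \eqref{eq:intro-hitting-identity}.} Take $D=V\setminus U$ and integrate the summed identity against the law of the independent time $T_q$. This gives
\begin{equation*}
  \pi(x)^{-1}\,\Ee\bigl[\pi(S^D_{T_q})\bigr]=\Pp_x\bigl(\tau_U>T_q\bigr)=1-h_q(x,U).
\end{equation*}
Here we use that $\tau_U>T_q$ and $\tau_U\le T_q$ are complementary, with ties having probability zero. Rearranging gives the identity.

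\textbf{Branch bound \eqref{eq:intro-branch-hitting}.} In Wilson's algorithm with killing, $L_x$ is the loop erasure of the chain started at $x$ and run until $T_q$, or until it hits the current tree when $x$ is not explored first. In every case the vertex set of $L_x$ is contained in the trajectory $\{X_s:s\le T_q\}$. Hence $\{y\in L_x\}\subset\{\tau_y\le T_q\}$, and taking probabilities gives the bound. We take the convention that $L_x$ is the branch obtained by starting Wilson's algorithm at $x$ first; otherwise the trajectory is stopped even earlier, so the inclusion still holds.

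\textbf{Connectivity \eqref{eq:intro-connectivity}.} We exploit the freedom in Wilson's order: run first from $x$, producing the path $L_x$ from $x$ to its root $\rho_q(x)$. Then run from $y$: the walk from $y$ is stopped either at $T_q$ or on first hitting $L_x$. If $x\leftrightarrow_q y$, it must hit $L_x$, and the hitting point $z$ is unique for a given realization. Therefore
\begin{equation*}
  \Pp(x\leftrightarrow_q y)\le\sum_z \Pp\bigl(z\in L_x,\ \text{the walk from } y \text{ hits } z \text{ before } T'_q\bigr).
\end{equation*}
Conditionally on $L_x$, the walk from $y$ is an independent chain with an independent killing clock $T'_q$ (memorylessness). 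So the inner probability is at most $h_q(y,z)$ by the same containment argument. Averaging over $L_x$ and applying \eqref{eq:intro-branch-hitting} yields $\sum_z h_q(x,z)h_q(y,z)$.

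The main obstacle is the duality identity for the punctured evolving set, in particular checking that the de-biased construction handles the mass lost at the boundary correctly. Once that identity is granted from Section~\ref{sec:construction}, the remaining steps are measure-theoretic bookkeeping plus Wilson's order-independence.
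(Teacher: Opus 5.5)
Your proposal is correct and follows the paper's proof essentially step for step: the hitting identity comes from the summed singleton intertwining averaged at $T_q$ (the exit projection with $D=V\setminus U$), the branch bound from the containment of $L_x$ in the raw killed trajectory, and the connectivity bound from running Wilson's algorithm from $x$ first, where your decomposition by the first hitting point of $L_x$ is just the paper's union bound $\Pp_y(\tau_{L_x}\leq T_q)\leq\sum_{z\in L_x}h_q(y,z)$ written as a disjoint sum. One small correction to your heuristic for the intertwining: in a punctured domain $\pi(S_t^D)$ is not a martingale but a supermartingale, since its one-step conditional mean drops by $Q(A,D^c)$, so the identity does not come from a martingale property; it follows by induction from the one-step inclusion rule, as in Lemma~\ref{lem:punctured-intertwining}, and since you take that lemma as given this does not affect your argument.
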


The next theorem is the main quantitative consequence of this projection. It concerns
component connectivity. For $x\in V$, put $C_q(x):=\{y\in V:y\leftrightarrow_q x\}$.

\begin{theorem}[Gaussian localization of massive forest components]\label{thm:intro-gaussian}
Assume that $\pi\equiv1$, let $n=|V|$, and let $d_G$ be a graph distance. Suppose that
there exist $\nu>4$ and constants $C_V,C_H,c_H>0$ such that, for every $x,y\in V$,
$r\geq0$ and $t\geq0$, we find:
\begin{equation}\label{eq:intro-volume-growth}
  |B(x,r)|\leq C_V(1+r)^\nu
\end{equation}
and also, we get:
\begin{equation}\label{eq:intro-gaussian-bound}
  P_t(x,y)
  \leq
  \frac{C_H}{(1+t)^{\nu/2}}
  \exp\left(-c_H\frac{d_G(x,y)^2}{1+t}\right)
  +\frac{C_H}{n}.
\end{equation}
Then there exist $C,c>0$, depending only on the constants above, such that, for every
$0<q\leq1$ and every $x,y\in V$, one gets:
\begin{equation}\label{eq:intro-hitting-gaussian}
  h_q(x,y)
  \leq C\left[
  (1+d_G(x,y))^{2-\nu}e^{-c\sqrt q\,d_G(x,y)}
  +\frac{1}{qn}
  \right],
\end{equation}
\begin{equation}\label{eq:intro-connectivity-gaussian}
  \Pp(x\leftrightarrow_q y)
  \leq C\left[
  (1+d_G(x,y))^{4-\nu}e^{-c\sqrt q\,d_G(x,y)}
  +\frac{1}{q^2n}
  \right],
\end{equation}
and
\begin{equation}\label{eq:intro-component-gaussian}
  \Ee[|C_q(x)|]\leq Cq^{-2}.
\end{equation}
\end{theorem}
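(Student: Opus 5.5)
We prove Theorem~\ref{thm:intro-gaussian} by combining the three inequalities of Theorem~\ref{thm:intro-hitting} with a Green-function estimate for the killed chain. The first step reduces $h_q(x,y)$ to a massive Green function. Set $G_q(x,y):=\int_0^\infty e^{-qt}P_t(x,y)\dd t$. By the strong Markov property at $\tau_y$, $G_q(x,y)=h_q(x,y)\,G_q(y,y)$, and since $G_q(y,y)\geq\int_0^1 e^{-qt}P_t(y,y)\dd t\geq c_0>0$ uniformly for $q\leq1$ (because $P_t(y,y)\geq e^{-t}$), we get $h_q(x,y)\leq c_0^{-1}G_q(x,y)$. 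It therefore suffices to bound $G_q$.

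The second step inserts the Gaussian bound \eqref{eq:intro-gaussian-bound} into $G_q$. Write $d=d_G(x,y)$. The constant term contributes $\int_0^\infty e^{-qt}C_H n^{-1}\dd t=C_H/(qn)$. For the Gaussian term we must bound
\[
\int_0^\infty e^{-qt}(1+t)^{-\nu/2}\exp\bigl(-c_H d^2/(1+t)\bigr)\dd t .
\]
We use $qt+c_Hd^2/(1+t)\geq \tfrac12 qt+\tfrac12 c_Hd^2/(1+t)+c'\sqrt q\,d-\tfrac12 q$, which follows from AM--GM applied to the halves. This extracts the factor $e^{-c\sqrt q\,d}$. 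The remaining integral $\int(1+t)^{-\nu/2}e^{-c d^2/(1+t)}\dd t$ is of order $(1+d)^{2-\nu}$ because $\nu>2$; to see this, substitute $s=d^2/(1+t)$. This gives \eqref{eq:intro-hitting-gaussian}.

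The third step is the convolution bound for connectivity. By \eqref{eq:intro-connectivity}, $\Pp(x\leftrightarrow_q y)\leq\sum_z h_q(x,z)h_q(z,y)$, using symmetry of $h_q$ up to the ratio of diagonal Green values, which is bounded since $\pi\equiv1$. Expanding the product of the two bounds gives four kinds of terms.

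The main term is $\sum_z (1+d(x,z))^{2-\nu}(1+d(z,y))^{2-\nu}e^{-c\sqrt q(d(x,z)+d(z,y))}$. The triangle inequality yields $e^{-c\sqrt q\,d(x,y)}$. The remaining lattice convolution of two $(2-\nu)$-kernels under volume growth $\nu$ is bounded by $C(1+d)^{4-\nu}$ when $\nu>4$. To prove this, split $z$ into $\{d(x,z)\leq d/2\}$, $\{d(y,z)\leq d/2\}$ and the rest, and use dyadic shells with $|B(x,r)|\leq C_V(1+r)^\nu$. This dyadic summation is the step I expect to be the main obstacle. With only an upper volume bound, shell sums give $\sum_{d(x,z)\le R}(1+d(x,z))^{2-\nu}\lesssim R^2$ via summation by parts, which is what is needed. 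On the far region both factors are at most $d(x,z)^{2-\nu}$, and $\sum_{r>d/2} r^{4-2\nu}\cdot r^{\nu-1}$ converges to $d^{4-\nu}$ since $\nu>4$; this summability is exactly where $\nu>4$ enters.

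The cross terms are $(qn)^{-1}\sum_z h$-type sums. We bound $\sum_z G_q(x,z)=1/q$ exactly by stochasticity, giving $C/(q^2n)$. The product of constants contributes $n\cdot(qn)^{-2}=1/(q^2n)$.

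The final step is the component size. Write
\[
\Ee|C_q(x)|=\sum_y\Pp(x\leftrightarrow_q y)\leq\sum_y\sum_z h_q(x,z)h_q(y,z)\leq C\sum_z G_q(x,z)\sum_y G_q(z,y).
\]
Since $\sum_y G_q(z,y)=1/q$ and $\sum_z G_q(x,z)=1/q$ (using reversibility with $\pi\equiv1$ to swap arguments), this gives $Cq^{-2}$ directly without the finite-volume term.
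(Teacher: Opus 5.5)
\textbf{Hitting bound and main connectivity term.} Here your proof is the paper's: the ratio $h_q(x,y)=G_q(x,y)/G_q(y,y)$ with $G_q(y,y)\geq c_0$, an AM--GM splitting to extract $e^{-c\sqrt q\,r}$, the substitution producing $r^{2-\nu}$, and the three-region dyadic estimate $\sum_z k(x,z)k(y,z)\leq C(1+d_G(x,y))^{4-\nu}$. The appeal to symmetry of $h_q$ is unnecessary, since your bound on $h_q(y,z)$ depends only on $d_G(y,z)$. Also, $\pi\equiv1$ by itself would not bound the ratio $G_q(y,y)/G_q(z,z)$.

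\textbf{Cross terms: a gap as written.} After expanding $\bigl(a_q(x,z)+b_q\bigr)\bigl(a_q(y,z)+b_q\bigr)$, with $a_q(x,z)=(1+d_G(x,z))^{2-\nu}e^{-c\sqrt q\,d_G(x,z)}$ and $b_q=1/(qn)$, the cross terms are $b_q\sum_z a_q(x,z)$. These are sums of the Gaussian majorant, not of $h_q$ or $G_q$. Since $a_q$ bounds part of $G_q$ from above rather than below, the identity $\sum_z G_q(x,z)=1/q$ says nothing about $\sum_z a_q(x,z)$.

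There are two easy repairs:
\begin{enumerate}[label=\textup{(\roman*)}]
  \item Use the paper's kernel-mass estimate \eqref{eq:kernel-mass}, $\sum_z a_q(x,z)\leq C\sum_{j\geq0}2^{2j}e^{-c_1\sqrt q\,2^j}\leq C/q$, which is a dyadic computation using the volume growth.
  \item Keep your stochasticity argument but avoid the full expansion. Split cases according to whether $a_q(x,z)$ and $a_q(y,z)$ exceed $b_q$; this gives the pointwise bound
  \[
  h_q(x,z)h_q(y,z)\leq 4C^2a_q(x,z)a_q(y,z)+2Cb_q\bigl[h_q(x,z)+h_q(y,z)\bigr].
  \]
  Now the cross terms really are $b_q\sum_z h_q(x,z)\leq c_0^{-1}b_q/q$, and similarly in $y$.
\end{enumerate}

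\textbf{Component size: a correct and genuinely different route.} The paper sums the Gaussian two-point bound over $y$. This needs a further dyadic estimate $\sum_y(1+d_G(x,y))^{4-\nu}e^{-c\sqrt q\,d_G(x,y)}\leq Cq^{-2}$ and uses $\nu>4$. You instead bound both factors by $h_q\leq c_0^{-1}G_q$ and use that $G_q$ is symmetric with row sums $1/q$ when $\pi\equiv1$.

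Your argument uses neither the Gaussian bound nor the volume growth. With $G_q(z,z)\geq 1/(1+q)$ it gives $\Ee[|C_q(x)|]\leq(1+1/q)^2$ for every reversible chain with uniform $\pi$, which is exactly the paper's complete-graph bound. So \eqref{eq:intro-component-gaussian} does not really depend on the geometric hypotheses. For the mean component size the paper's route gives nothing sharper; the Gaussian input is genuinely needed only for the pointwise decay in \eqref{eq:intro-hitting-gaussian} and \eqref{eq:intro-connectivity-gaussian}.
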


The term $1/(q^2n)$ is the finite-volume floor. It disappears on an infinite graph when
the Gaussian estimate has no equilibrium term. The scale $q^{-1/2}$ is the diffusive
scale of the walk before killing. The determinantal law of the roots alone does not control
which vertices belong to the same component; this estimate uses the hitting projection and
Wilson's algorithm.

For completeness, the root factorization also gives a determinant-free product domination
and Poisson-type upper tails for the number of roots in a region. We keep these estimates as
corollaries. They are not presented as a new determinantal phenomenon: for a determinantal
process, the number of points in a region has the exact law of a sum of independent
Bernoulli variables; see~\cite{HKPV2006}.

The construction is related to the resolvent, but the resolvent is not the starting point of
the proofs. The ordinary evolving set satisfies
\begin{equation*}
  \forall (x,y)\in V\times V\,,\qquad\Pp_{\{x\}}(y\in S_{T_q})
  =\frac{\pi(x)}{\pi(y)}q(qI+L)^{-1}(x,y).
\end{equation*}
We say that the evolving set at exponential time is a set-valued lift of the root kernel;
see Section~\ref{sec:resolvent}.

Kirchhoff forests, spanning trees and Wilson's algorithm are classical; see
\cite{Pemantle1991,Wilson1996,BLPS2001,LyonsPeres2016}. Determinantal formulas for
edges go back to the transfer-current theorem of Burton and Pemantle
\cite{BurtonPemantle1993}. Determinantal roots, their spectral interpretation and the
related fragmentation--coalescence processes were developed by Avena and Gaudilli\`ere
\cite{AvenaGaudilliere2013,AvenaGaudilliere2018}; see also the survey
\cite{AvenaCastellGaudilliereMelot2018}. The loop-erased partition and its two-point correlations were further studied in~\cite{AvenaDriessenKoperberg2024}, with particular
attention to monotonicity in the killing parameter and to one-dimensional and tree-like
geometries. Our component estimates have a different purpose: they are obtained on
reversible graphs satisfying uniform Gaussian heat-kernel estimates and give spatial decay
and uniform moment bounds for the forest components.

The mean-field regime and its two-point correlations were analyzed by Avena,
Gaudilli\`ere, Milanesi and Quattropani in~\cite{AvenaGaudilliereMilanesiQuattropani2022},
on the complete graph and on weighted complete graphs with a community structure. For
comparison, on $\mathbb Z^d$ the relation of belonging to the same component of the
uniform spanning forest has stochastic dimension four; see
\cite{BenjaminiKestenPeresSchramm2004}. Thus, for $d>4$, its two-point function has
polynomial order $(1+|x-y|)^{4-d}$. The polynomial factor in
\eqref{eq:intro-connectivity-gaussian} has the same exponent, while positive killing adds
an exponential cutoff at the scale $q^{-1/2}$. This is only a comparison of exponents: our
result is a one-sided bound for massive forests and not a stochastic-dimension statement.

The massive forest on the complete graph was analyzed in detail in~\cite{DAchilleEnriquezMelotti2026}, who obtained exact finite-size
distributions and the local limit. The calculation below is therefore included only as a
direct verification of the punctured factorization and is not claimed as a new description
of the massive forest on the complete graph. Evolving sets were introduced by Morris and
Peres~\cite{MorrisPeres2005}. The process-level coupling used below is an intertwining in
the sense of Diaconis and Fill~\cite{DiaconisFill1990}. Gaussian heat-kernel estimates on
graphs and their relation with volume growth and Poincar\'e inequalities are classical; see ~\cite{Delmotte1999}.

\begin{remark}[The infinite massive Kirchhoff forest]\label{rem:infinite-massive-forest}
The massive forest also has a direct Wilson construction when $V$ is countable. Fix an
enumeration of $V$, add a cemetery state, and kill each Wilson walk at an independent
exponential time of parameter $q$. Since the chain makes only finitely many jumps on every
bounded time interval, each killed trajectory and its loop-erasure are finite almost surely.
Starting from the cemetery state and applying Wilson's algorithm successively to the
enumerated vertices therefore defines a spanning rooted forest, with exactly one root in
each component.

The law does not depend on the enumeration. Indeed, a cylinder event involves only
finitely many oriented edges, and only finitely many walks precede their initial vertices in
any fixed enumeration. Order-independence then reduces to the finite Wilson algorithm,
while later walks do not change edges already constructed. Equivalently, this law is the
local limit along any exhaustion of $V$ of the finite-volume forests in which a walk is also
killed when it leaves the volume. For a fixed finite collection of initial walks, this extra
killing disappears in the limit because all trajectories killed at rate $q$ are finite almost
surely. This construction is what we call the infinite massive Kirchhoff forest. The Wilson
identities and bounds involving finitely many starting vertices extend to it by exhaustion,
whenever the quantities on their right-hand sides are finite.
\end{remark}

The contribution of the present paper is therefore not a new determinantal formula. It is
the following probabilistic organization. Wilson's algorithm chooses successively punctured
domains, the transformed evolving set carries the stopped trajectory, and the ordinary
evolving set gives two de-biased projections. The final-point projection gives root
factorizations; the survival projection gives hitting estimates and then component
localization.

Section~\ref{sec:roots} derives determinantal, spectral and probabilistic consequences of
the root identities. Section~\ref{sec:hitting} develops the geometric consequences of the
hitting identity. Section~\ref{sec:examples} treats examples. Section~\ref{sec:construction}
constructs the punctured Wilson--evolving-set coupling. Section~\ref{sec:proofs} contains
the proofs. Section~\ref{sec:resolvent} explains the resolvent interpretation and records
occupation and exploration formulas.

\newpage
\section{Consequences of the root identities}\label{sec:roots}

We now derive several consequences of Theorem~\ref{thm:intro-roots}. We begin with a
direct transfer principle for estimates on evolving sets in punctured domains.

\begin{corollary}[Abstract transfer to roots]\label{cor:abstract-transfer}
Assume that, for every $D\subset V$, every $x\in D$ and every $t\geq0$, it follows that:
\begin{equation}\label{eq:abstract-transfer-assumption}
  \Pp^D_{\{x\}}(x\in S_t^D)\leq G_D(t,x).    
\end{equation}
Then, one finds for $x_1,\ldots,x_k$ distinct vertices of $V$:
\begin{equation}\label{eq:abstract-transfer-bound}
  \Pp(x_1,\ldots,x_k\in\Root)
  \leq
  \prod_{i=1}^k\int_0^\infty qe^{-qt}G_{D_{i-1}}(t,x_i)\dd t.
\end{equation}
\end{corollary}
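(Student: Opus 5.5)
The plan is to start from the multi-root factorization \eqref{eq:intro-multiroot} of Theorem~\ref{thm:intro-roots} and bound each factor separately. Every factor on the right-hand side is a probability, hence non-negative. So if each factor is dominated by a non-negative quantity, the product is dominated by the product of these quantities.

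Fix $i\in[[1,k]]$. Since the $x_j$ are distinct, $x_i\notin A_{i-1}$, so $x_i\in D_{i-1}=V\setminus A_{i-1}$. The set $D_{i-1}$ is non-empty, and the punctured evolving set $S^{D_{i-1}}$ started from $\{x_i\}$ is therefore well defined. The exponential time $T_q$ is independent of the evolving set by the standing convention. Conditioning on $T_q$ and using Fubini (the integrand is bounded and measurable in $t$) gives
\begin{equation*}
  \Pp^{D_{i-1}}_{\{x_i\}}\bigl(x_i\in S^{D_{i-1}}_{T_q}\bigr)
  =\int_0^\infty qe^{-qt}\,\Pp^{D_{i-1}}_{\{x_i\}}\bigl(x_i\in S^{D_{i-1}}_{t}\bigr)\dd t .
\end{equation*}
The measurability of $t\mapsto \Pp^{D}_{\{x\}}(x\in S^D_t)$ should come from the construction in Section~\ref{sec:construction}: it is a Markov process on finitely many subsets with càdlàg paths, or it is defined at integer times and extended piecewise. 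Either way the map is measurable, and I would simply cite the construction here.

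Next, apply the hypothesis \eqref{eq:abstract-transfer-assumption} with $D=D_{i-1}$, $x=x_i$ and each $t\ge0$. Since $qe^{-qt}\ge0$, integrating preserves the inequality and bounds the $i$-th factor by $\int_0^\infty qe^{-qt}G_{D_{i-1}}(t,x_i)\dd t$. This integral lies in $[0,\infty]$. It is non-negative because $G_D$ dominates a probability and may be assumed non-negative (replacing it by its positive part if necessary). Multiplying the $k$ inequalities between non-negative quantities and inserting the result into \eqref{eq:intro-multiroot} yields \eqref{eq:abstract-transfer-bound}. If some integral is infinite, the bound is trivial.

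There is no real obstacle. The only points needing care are that the hypothesis is applied with the correct punctured domain $D_{i-1}$ and a point $x_i$ inside it, which holds by distinctness, and that $T_q$ is independent of the punctured evolving set, so that averaging over $T_q$ turns the factor into the integral against $qe^{-qt}$.
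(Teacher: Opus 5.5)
Your proposal is correct and follows the paper's proof: you average each punctured factor in \eqref{eq:intro-multiroot} over the independent time $T_q$ to obtain $\int_0^\infty qe^{-qt}\Pp^{D_{i-1}}_{\{x_i\}}\bigl(x_i\in S_t^{D_{i-1}}\bigr)\dd t$, apply the hypothesis pointwise in $t$, and multiply the non-negative bounds. The ``positive part'' caveat is unnecessary, since $G_D(t,x)$ dominates a probability and is therefore already non-negative wherever it is used.
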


\subsection{Comparison with the determinantal formula}

Define
\begin{equation}\label{eq:root-kernel-definition}
  K_q:=q(qI+L)^{-1}.
\end{equation} The roots form a determinantal process with kernel $K_q$; see
\cite{AvenaGaudilliere2013,AvenaGaudilliere2018}. The next proposition identifies the
ordered Wilson factorization with the corresponding algebraic factorization.

For every non-empty $D\subset V$, let $L_D$ be the Dirichlet restriction of $L$ to
$D$, namely for every function $f:D\to\mathbb R$,
\begin{equation}\label{eq:dirichlet-generator-definition}
  L_Df(x)
  =
  f(x)-\sum_{y\in D}P(x,y)f(y),
  \qquad x\in D.
\end{equation}

\begin{proposition}[Probabilistic Schur-complement factorization]\label{prop:determinantal-comparison}
Let $k\geq 1$ and fix $x_1,\ldots,x_k$ distinct vertices of $V$. Let $A=\{x_1,\ldots,x_k\}$ and for $i\in [[1,k]]$, let $D_{i-1}=V\setminus\{x_1,\ldots,x_{i-1}\}$. Then
\begin{equation}\label{eq:determinantal-comparison}
  \Pp(A\subset\Root)
  =\det K_q[A]
  =\prod_{i=1}^k q(qI+L_{D_{i-1}})^{-1}(x_i,x_i).
\end{equation}
Equivalently, the Wilson--evolving-set factorization is a probabilistic decomposition of
the principal minor $\det K_q[A]$ into successive Schur-complement pivots.
\end{proposition}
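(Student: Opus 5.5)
The plan is to prove the two equalities in \eqref{eq:determinantal-comparison} separately. The first is the determinantal description of the roots cited from \cite{AvenaGaudilliere2013,AvenaGaudilliere2018}. For the second, we combine the Schur-complement formula for principal minors with a resolvent identity for Dirichlet restrictions. An alternative route to the product runs through Theorem~\ref{thm:intro-roots}: each punctured factor $\Pp^{D_{i-1}}_{\{x_i\}}(x_i\in S^{D_{i-1}}_{T_q})$ is identified with the diagonal resolvent entry, using the punctured version of the set-valued lift $\Pp_{\{x\}}(y\in S_{T_q})=\frac{\pi(x)}{\pi(y)}K_q(x,y)$. On the diagonal the ratio $\pi(x)/\pi(y)$ equals one, so the factor is exactly $q(qI+L_{D_{i-1}})^{-1}(x_i,x_i)$.

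The purely algebraic identity $\det K_q[A]=\prod_i q(qI+L_{D_{i-1}})^{-1}(x_i,x_i)$ follows by induction on $k$, with $k=1$ trivial since $D_0=V$. Write $M=qI+L$, which is invertible because its spectrum lies in $[q,q+2]$ after symmetrization by $\pi$, and set $G=M^{-1}$. The key lemma is the following: for $B\subset V$ and $D=V\setminus B$, the Schur complement of the block $G[B]$ in $G$ is $(M[D])^{-1}=(qI+L_D)^{-1}$. This is the standard block-inverse formula, $(G^{-1})[D]^{-1}=G[D]-G[D,B]G[B]^{-1}G[B,D]$. It requires $G[B]$ to be invertible, which holds because $G$ is similar to a positive-definite symmetric matrix, namely $\pi^{1/2}G\pi^{-1/2}$.

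With this lemma, expand $\det G[A]$ by successive pivoting in the order $x_1,x_2,\dots$. The first pivot is $G(x_1,x_1)$. After eliminating $x_1$, the remaining matrix on $A\setminus\{x_1\}$ is the restriction to $A\setminus\{x_1\}$ of the Schur complement of $G$ with respect to $\{x_1\}$, which by the lemma is $(qI+L_{D_1})^{-1}$. Iterating, the $i$-th pivot is $(qI+L_{D_{i-1}})^{-1}(x_i,x_i)$. Restriction commutes with taking Schur complements, since eliminating within $G$ and then restricting agrees with restricting and then eliminating. Multiplying by $q^k$ gives the product.

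I expect the main obstacle to be the careful bookkeeping of this step: checking that the Schur complement computed on the full $V$ is the Dirichlet resolvent $(qI+L_D)^{-1}$, and not the resolvent of some trace chain. Checking positivity of the pivots, which justifies each division, is routine. If the probabilistic identification of the factors is used instead, the delicate point is the punctured lift in the killed domain. Since $q(qI+L_D)^{-1}(x,x)$ is the probability that the chain killed outside $D$ is still at $x$ at time $T_q$, I would verify directly that the punctured construction of Section~\ref{sec:construction} gives this diagonal value.
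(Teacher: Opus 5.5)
Your proposal is correct, but it is organized differently from the paper's proof.

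You take $\Pp(A\subset\Root)=\det K_q[A]$ from the cited determinantal theorem. You then prove $\det K_q[A]=\prod_i q(qI+L_{D_{i-1}})^{-1}(x_i,x_i)$ by iterated elimination, with the block-inverse identity $G[D]-G[D,B]G[B]^{-1}G[B,D]=(M[D])^{-1}$ as the key lemma.

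The paper never invokes the determinantal theorem inside the proof. Its steps are:
\begin{itemize}
\item It combines the multi-root factorization \eqref{eq:intro-multiroot} with the punctured resolvent identity \eqref{eq:punctured-resolvent}. That identity is the $T_q$-average of \eqref{eq:singleton-intertwining}, which is exactly your ``alternative route''. This gives $\Pp(A\subset\Root)=\prod_i q\,(M_{D_{i-1}}^{-1})(x_i,x_i)$ with $M=qI+L$.
\item Cramer's rule writes each factor as $q\det M_{D_i}/\det M_{D_{i-1}}$, so the product telescopes to $q^k\det M_{V\setminus A}/\det M$.
\item Jacobi's complementary-minor identity $\det\bigl((M^{-1})[A]\bigr)=\det M_{V\setminus A}/\det M$ identifies this with $\det K_q[A]$.
\end{itemize}

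The paper's route needs only $\det M_D\neq 0$ for every $D$, which is automatic for $q>0$. It avoids iterating Schur complements and checking invertibility of $G[B]$. As a by-product, it recovers the determinantal inclusion probabilities from Wilson's algorithm rather than assuming them.

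Your route makes the ``successive Schur-complement pivots'' reading of the statement literal. It also shows why Dirichlet resolvents appear rather than trace-chain ones: you take the Schur complement of $G=M^{-1}$, not of $M$. The cost is the invertibility bookkeeping. Your $\pi^{1/2}$-symmetrization handles it correctly, since conjugation by a diagonal matrix commutes with taking principal submatrices.

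If you combine your probabilistic identification of the factors with your algebraic identity, the citation for the first equality becomes unnecessary. You then get the same self-contained logic as the paper.
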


\begin{remark}\label{rem:what-is-new}
Formula \eqref{eq:determinantal-comparison} separates the known algebraic statement from
the contribution used in this paper. The determinant gives the symmetric inclusion
probability. The punctured exploration explains the ordered factors and permits estimates
on each domain $D_{i-1}$, for $i\in [[1,k]]$. The survival projection, which is used later for component
connectivity, is not contained in the determinantal law of the roots.
\end{remark}

\subsection{Dirichlet transfer in punctured domains}

Put $\cE(f,f)
  :=\frac12\sum_{x,y\in V}\pi(x)P(x,y)(f(x)-f(y))^2$ for $f:V\mapsto \mathbb{R}$ 
and, for every non-empty $D\subset V$, define
\begin{equation*}
  \lambda_1(D)
  :=\inf_{\substack{f\not\equiv0\\f=0\text{ on }D^c}}
  \frac{\cE(f,f)}{\norm{f}_{L^2(\pi)}^2}.
\end{equation*}
For $D\subset V$ and $A\subset D$, the Dirichlet boundary flow of $A$ is
\begin{equation*}
  Q_D^\partial(A)
  :=Q(A,V\setminus A)
  =Q(A,D\setminus A)+Q(A,D^c).
\end{equation*}
The second term is the mass killed when the chain exits $D$. A Dirichlet conductance
profile is
\begin{equation*}
  \Phi_D(r)
  :=\inf_{\substack{A\subset D\\0<\pi(A)\leq r}}
  \frac{Q(A,V\setminus A)}{\pi(A)}.
\end{equation*}

\begin{corollary}[Dirichlet spectral and Cheeger bounds]\label{cor:dirichlet-transfer}
For distinct $x_1,\ldots,x_k$ elements of $V$, we find:
\begin{equation}\label{eq:dirichlet-root-bound}
  \Pp(x_1,\ldots,x_k\in\Root)
  \leq
  \prod_{i=1}^k\frac{q}{q+\lambda_1(D_{i-1})}.
\end{equation}
If $ \phi_D
  :=\inf_{\varnothing\neq A\subset D}
  \frac{Q(A,V\setminus A)}{\pi(A)},$ then, we get:
\begin{equation}\label{eq:cheeger-root-bound}
  \Pp(x_1,\ldots,x_k\in\Root)
  \leq
  \prod_{i=1}^k
  \frac{q}{q+\phi_{D_{i-1}}^2/2}.
\end{equation}
\end{corollary}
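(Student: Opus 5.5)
We prove the Dirichlet bound \eqref{eq:dirichlet-root-bound} by bounding each factor of \eqref{eq:determinantal-comparison}, and then derive \eqref{eq:cheeger-root-bound} from it through a Dirichlet Cheeger inequality. By Proposition~\ref{prop:determinantal-comparison},
\[
  \Pp(x_1,\ldots,x_k\in\Root)=\prod_{i=1}^k q(qI+L_{D_{i-1}})^{-1}(x_i,x_i),
\]
so it suffices to show that, for every non-empty $D\subset V$ and every $x\in D$,
\[
  q(qI+L_D)^{-1}(x,x)\leq \frac{q}{q+\lambda_1(D)}.
\]
Here $D_0=V$. Since $\pi$ is not normalized, this inequality must be checked in the weighted setting.

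\textbf{Spectral bound on a diagonal entry.} The operator $L_D$ is self-adjoint on $L^2(D,\pi)$ by reversibility. For $f$ vanishing off $D$, a direct expansion gives $\langle f,L_Df\rangle_\pi=\cE(f,f)$: the terms $P(x,y)$ with $y\notin D$ appear as killing contributions and are exactly the edges to $D^c$ counted in $\cE$. Hence the bottom of the spectrum of $L_D$ is $\lambda_1(D)$, and
\[
  \norm{q(qI+L_D)^{-1}}_{L^2(\pi)\to L^2(\pi)}\leq \frac{q}{q+\lambda_1(D)}.
\]
Write $M=q(qI+L_D)^{-1}$, so that $Mf(x)=\sum_y M(x,y)f(y)$. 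Then
\[
  M(x,x)=\frac{\langle \1_x, M\1_x\rangle_\pi}{\pi(x)}\leq \frac{\norm{M}\,\norm{\1_x}_\pi^2}{\pi(x)}=\norm{M},
\]
because $\norm{\1_x}_\pi^2=\pi(x)$. Multiplying these bounds over $i$ gives \eqref{eq:dirichlet-root-bound}.

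There is an alternative route. By Theorem~\ref{thm:intro-roots}, each factor equals $\int_0^\infty qe^{-qt}\,\Pp^D_{\{x\}}(x\in S^D_t)\dd t$, and the evolving-set/semigroup duality gives $\Pp^D_{\{x\}}(x\in S^D_t)=P^D_t(x,x)\leq e^{-\lambda_1(D)t}$. The same bound then follows from Corollary~\ref{cor:abstract-transfer}. Either route works; the resolvent route is shorter.

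\textbf{Cheeger step.} For \eqref{eq:cheeger-root-bound} it suffices to prove $\lambda_1(D)\geq \phi_D^2/2$ for every $D$, and then use monotonicity of $\lambda\mapsto q/(q+\lambda)$. This is the Dirichlet Cheeger inequality. Take $f\geq0$ supported in $D$; this is allowed since $\cE(|f|,|f|)\leq\cE(f,f)$. Apply the co-area formula to $f^2$ with level sets $A_s=\{f^2>s\}\subset D$:
\[
  \sum_{x,y}\pi(x)P(x,y)\,|f^2(x)-f^2(y)|\;\geq\; 2\phi_D\norm{f}_\pi^2,
\]
where the sum counts boundary flow to $D^c$, on which $f=0$. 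Cauchy--Schwarz bounds the left side by
\[
  \sqrt{2\cE(f,f)}\;\sqrt{\textstyle\sum_{x,y}\pi(x)P(x,y)(f(x)+f(y))^2},
\]
and the last factor is at most $\sqrt{4\norm f_\pi^2}$ because $P$ is stochastic. This gives $\cE(f,f)\geq \phi_D^2\norm f_\pi^2/2$, and with it the claim.

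The main obstacle is bookkeeping the constant so that it is exactly $1/2$. One has to fix the normalization of $Q(A,B)=\sum_{a\in A,\,b\in B}\pi(a)P(a,b)$ and track the factor $\frac12$ in $\cE$ against the double counting of ordered pairs in the co-area sum. The Dirichlet setting needs no median argument, since the supports lie in $D$ and the edges to $D^c$ are part of $Q(A,V\setminus A)$.
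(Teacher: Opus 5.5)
Your proof is correct. For \eqref{eq:dirichlet-root-bound} it is essentially the paper's argument. The paper takes the route you list as the alternative: it writes each factor as $\int_0^\infty qe^{-qt}P_t^D(x,x)\dd t$ via the punctured intertwining, bounds $P_t^D(x,x)=\langle \1_{\{x\}},e^{-tL_D}\1_{\{x\}}\rangle_\pi/\pi(x)\leq e^{-\lambda_1(D)t}$ by the spectral theorem, and then integrates. You apply the same spectral bound directly to the Laplace transform $q(qI+L_D)^{-1}$, taken from Proposition~\ref{prop:determinantal-comparison}. Only its resolvent equality, i.e.\ \eqref{eq:punctured-resolvent}, is used there, not the Cramer/Jacobi part.

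The real difference is in the Cheeger step. The paper only invokes the Dirichlet Cheeger inequality $\lambda_1(D)\geq\phi_D^2/2$, whereas you prove it by co-area on $f^2$ and Cauchy--Schwarz. Your constant bookkeeping is correct for the paper's unnormalized $\pi$ and its normalization of $\cE$: $2\phi_D\norm f_\pi^2\leq\sqrt{2\cE(f,f)}\cdot 2\norm f_\pi$ gives exactly the factor $1/2$. The case $D_0=V$ is harmless, since $\phi_V=\lambda_1(V)=0$. This makes the corollary self-contained.

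One small correction concerns two steps that rely on reversibility of $\pi$, not only on $P$ being stochastic.
\begin{itemize}[label={}]
\item The bound $\sum_{x,y}\pi(x)P(x,y)(f(x)+f(y))^2\leq 4\norm f_\pi^2$ also needs invariance, $\sum_x\pi(x)P(x,y)=\pi(y)$.
\item The factor $2$ in your co-area lower bound uses $Q(A_s,A_s^c)=Q(A_s^c,A_s)$.
\end{itemize}
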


\subsection{A determinant-free spectral domination of root events}

Let $\bar\pi=\pi/\pi(V)$ be the probability measure driven by the positive measure $\pi$,  let $\gamma$ be the spectral gap of $L$ on
$L^2(\bar\pi)$, and define
\begin{equation}\label{eq:beta-definition}
  \forall x\in V\,,\qquad
  \beta_q(x)
  :=
  \bar\pi(x)
  +\bigl(1-\bar\pi(x)\bigr)\frac{q}{q+\gamma}.
\end{equation}

\begin{theorem}[Spectral domination of root events]\label{thm:spectral-domination} For distinct $x_1,\ldots,x_k$ elements of $V$, let $A=\{x_1,\ldots,x_k\}\subset V$. Then, it comes:
\begin{equation}\label{eq:combined-root-product}
  \Pp(A\subset\Root)
  \leq
  \prod_{i=1}^k
  \min\left\{
    \beta_q(x_i),
    \frac{q}{q+\lambda_1(D_{i-1})}
  \right\}.
\end{equation}
In particular, it follows:
\begin{equation}\label{eq:root-product-domination}
  \Pp(A\subset\Root)
  \leq\prod_{x\in A}\beta_q(x).
\end{equation}
\end{theorem}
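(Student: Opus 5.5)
We prove \eqref{eq:combined-root-product} factor by factor, using the ordered factorization of Proposition~\ref{prop:determinantal-comparison}:
\[
\Pp(A\subset\Root)=\prod_{i=1}^k q(qI+L_{D_{i-1}})^{-1}(x_i,x_i).
\]
It suffices to show that each pivot $g_i:=q(qI+L_{D_{i-1}})^{-1}(x_i,x_i)$ is at most $q/(q+\lambda_1(D_{i-1}))$ and also at most $\beta_q(x_i)$. Taking the minimum of the two bounds in each factor gives \eqref{eq:combined-root-product}. The bound \eqref{eq:root-product-domination} then follows by discarding the second term of each minimum.

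\textbf{The Dirichlet bound.} The first bound is exactly the $i$-th factor of Corollary~\ref{cor:dirichlet-transfer}. For a direct argument, note that $L_D$ is self-adjoint on $L^2(\pi|_D)$ with smallest eigenvalue $\lambda_1(D)$. Writing the kernel entry in spectral form,
\[
(qI+L_D)^{-1}(x,x)=\pi(x)^{-1}\langle \1_x,(qI+L_D)^{-1}\1_x\rangle_\pi\leq \frac{1}{q+\lambda_1(D)},
\]
since $\langle\1_x,\1_x\rangle_\pi=\pi(x)$.

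\textbf{The spectral-gap bound.} This is the key step. We first use domain monotonicity of pivots: $g_i\leq K_q(x_i,x_i)$. There are two ways to see this.
\begin{itemize}[leftmargin=*]
\item Probabilistically, $g_i=\Pp(x_i\in\Root\mid x_1,\dots,x_{i-1}\in\Root)$ by Theorem~\ref{thm:intro-roots}.
\item Algebraically, $g_i$ equals $K_q(x_i,x_i)$ minus a nonnegative quadratic form. The Schur complement of a positive semidefinite matrix decreases diagonal entries.
\end{itemize}
The cleanest version is the algebraic one. The matrix $K_q$ is similar (by conjugation with $\pi^{1/2}$) to a symmetric matrix with eigenvalues in $(0,1]$. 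The Schur pivot $\det K_q[A_i]/\det K_q[A_{i-1}]$ equals $K_q(x_i,x_i)-b^{\top}K_q[A_{i-1}]^{-1}b$ in symmetrized coordinates, so it is at most $K_q(x_i,x_i)$.

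It remains to bound $K_q(x,x)=q(qI+L)^{-1}(x,x)$. Expand $\1_x$ in $L^2(\bar\pi)$ into its constant part $\bar\pi(x)$ and its mean-zero remainder $f=\1_x-\bar\pi(x)$. The operator $q(qI+L)^{-1}$ fixes constants and has norm at most $q/(q+\gamma)$ on mean-zero functions. Therefore
\[
K_q(x,x)=\bar\pi(x)^{-1}\langle\1_x,K_q\1_x\rangle_{\bar\pi}\leq \bar\pi(x)^{-1}\Bigl(\bar\pi(x)^2+\tfrac{q}{q+\gamma}\norm{f}^2_{L^2(\bar\pi)}\Bigr).
\]
Here the cross terms vanish because $K_q$ preserves mean-zero functions. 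Since $\norm{f}^2=\bar\pi(x)(1-\bar\pi(x))$, the right-hand side equals $\beta_q(x)$.

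\textbf{Main obstacle.} The main point needing care is the monotonicity $g_i\leq K_q(x_i,x_i)$, that is, removing earlier roots cannot increase the pivot. I would justify it by the symmetrized Schur complement identity and positive semidefiniteness, using $\pi$-reversibility to symmetrize. I would avoid the conditional-probability interpretation, which implicitly requires negative association.
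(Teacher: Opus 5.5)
Your proof is correct. The Dirichlet half and the final computation of $K_q(x,x)$ are the paper's argument, written for the resolvent instead of for the heat kernel before averaging at $T_q$.

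The genuine difference is the monotonicity step $g_i\leq K_q(x_i,x_i)$. You obtain it algebraically. You pass through Proposition~\ref{prop:determinantal-comparison} to write $g_i=\det K_q[A_i]/\det K_q[A_{i-1}]$, symmetrize by $\pi^{1/2}$, and use that a Schur complement of a positive definite matrix is at most the corresponding diagonal entry.

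The paper works on each punctured factor directly, by pathwise domination. It writes $g_i=q\int_0^\infty e^{-qt}P_t^{D_{i-1}}(x_i,x_i)\dd t$ and uses $P_t^{D}(x,x)=\Pp_x(X_t=x,\ t<\tau_{D^c})\leq P_t(x,x)$: the killed walk can only be at $x$ if the free walk is. It then bounds $P_t(x,x)\leq\bar\pi(x)+(1-\bar\pi(x))e^{-\gamma t}$ spectrally and averages at $T_q$.

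So the step you single out as the main obstacle has a one-line probabilistic proof that needs no negative association. You are right that comparing conditional root probabilities would need such an input, but the relevant monotonicity is domain monotonicity of the Dirichlet heat kernel, not a correlation inequality.

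The paper's route is self-contained and determinant-free, which is the point of this subsection. It does not need the Jacobi identity or the punctured resolvent identity \eqref{eq:punctured-resolvent} on which Proposition~\ref{prop:determinantal-comparison} rests.

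Your route has a compensating merit: it shows what \eqref{eq:root-product-domination} is algebraically. It is Hadamard's inequality $\det K_q[A]\leq\prod_{x\in A}K_q(x,x)$ for the symmetrized kernel, combined with the diagonal bound $K_q(x,x)\leq\beta_q(x)$. Your pivot-by-pivot (Fischer-type) version then gives the sharper mixed bound \eqref{eq:combined-root-product}.
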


\subsection{Root moments and a determinant-free concentration corollary}

Let $ N_q:=|\Root| $ and $(N_q)_k=N_q(N_q-1)\cdots(N_q-k+1).$ For $U\subset V$, put $N_q(U):=|\Root\cap U|.$
\begin{proposition}[Root moments, void events and basins]\label{prop:root-moments}
For every $k\geq1$, we find:
\begin{equation*}
  \Ee[(N_q)_k]
  =\sum_{\substack{x_1,\ldots,x_k\in V\\\mathrm{distinct}}}
  \prod_{i=1}^k
  \Pp^{V\setminus A_{i-1}}_{\{x_i\}}
  \bigl(x_i\in S_{T_q}^{V\setminus A_{i-1}}\bigr).
\end{equation*}
More generally, for any $U\subset V$, it follows:
\begin{equation*}
  \Ee[(N_q(U))_k]
  =\sum_{\substack{x_1,\ldots,x_k\in U\\\mathrm{distinct}}}
  \prod_{i=1}^k
  \Pp^{V\setminus A_{i-1}}_{\{x_i\}}
  \bigl(x_i\in S_{T_q}^{V\setminus A_{i-1}}\bigr).
\end{equation*}
Furthermore, one finds for any $U\subset V$,
\begin{equation}\label{eq:void-event}
  \Pp(\Root\cap U=\emptyset)
  =\sum_{A\subset U}(-1)^{|A|}\Pp(A\subset\Root).
\end{equation}
For every $x\in V$ and $U\subset V$, we find:
\begin{equation}\label{eq:root-in-set}
  \Pp(\rho_q(x)\in U)
  =\frac{1}{\pi(x)}\Ee_{\{x\}}[\pi(S_{T_q}\cap U)].
\end{equation}
If $B_q(U):=\{x\in V:\rho_q(x)\in U\},$ then it follows:
\begin{equation}\label{eq:collective-basin}
  \Ee[\pi(B_q(U))]=\pi(U).
\end{equation}
Finally, we find:
\begin{equation*}
  \Ee[|E(F_q)|]=|V|-\Ee[N_q].
\end{equation*}
\end{proposition}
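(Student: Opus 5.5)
The statement is a collection of elementary consequences of Theorem~\ref{thm:intro-roots}, obtained by linearity of expectation and inclusion--exclusion. I would prove the items in the order listed.

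\textbf{Factorial moments.} Write $(N_q)_k$ as the number of ordered $k$-tuples of distinct roots:
\begin{equation*}
  (N_q)_k=\sum_{x_1,\ldots,x_k\ \mathrm{distinct}}\prod_{i=1}^k\1\{x_i\in\Root\}.
\end{equation*}
Taking expectations gives $\sum\Pp(x_1,\ldots,x_k\in\Root)$. Each summand is then replaced by the ordered product \eqref{eq:intro-multiroot}, applied to that particular ordering of the tuple. Since \eqref{eq:intro-multiroot} holds for every ordering, there is no symmetry issue: the punctured domains $D_{i-1}=V\setminus A_{i-1}$ depend on the tuple, exactly as written. The version with $U$ is identical, with the sum restricted to tuples in $U$.

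\textbf{Void events.} Expand $\1\{\Root\cap U=\emptyset\}=\prod_{x\in U}(1-\1\{x\in\Root\})$. Multiplying out gives $\sum_{A\subset U}(-1)^{|A|}\1\{A\subset\Root\}$, and taking expectations yields \eqref{eq:void-event}, with the convention $\Pp(\emptyset\subset\Root)=1$.

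\textbf{Root in a set and basins.} Sum \eqref{eq:intro-root-law} over $r\in U$:
\begin{equation*}
  \Pp(\rho_q(x)\in U)=\frac{1}{\pi(x)}\sum_{r\in U}\pi(r)\,\Pp_{\{x\}}(r\in S_{T_q})=\frac{1}{\pi(x)}\Ee_{\{x\}}\bigl[\pi(S_{T_q}\cap U)\bigr],
\end{equation*}
which is \eqref{eq:root-in-set}. For \eqref{eq:collective-basin}, write $\pi(B_q(U))=\sum_{r\in U}\pi(B_q(r))$, since basins of distinct roots are disjoint. By \eqref{eq:intro-basin-law} each $\Ee[\pi(B_q(r))]=\pi(r)$; the only point to note is that this holds for every $r\in V$, including when $r$ is not a root, in which case the basin is empty. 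Summing over $r\in U$ gives $\pi(U)$.

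\textbf{Edge count.} A rooted spanning forest on $V$ whose components each contain exactly one root has $|V|-|\Root|$ edges, because each non-root vertex has exactly one parent edge. Taking expectations gives $\Ee[|E(F_q)|]=|V|-\Ee[N_q]$.

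\textbf{Expected obstacle.} None of these steps is genuinely hard. The only care needed is in the factorial-moment formula, where one must use the tuple-dependent punctured domains, and in the basin identity, where one must check that \eqref{eq:intro-basin-law} is valid for arbitrary $r$ rather than only for $r\in\Root$.
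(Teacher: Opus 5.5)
Your proposal is correct and matches the paper's proof step for step. You expand the falling factorial into indicators and apply the ordered multi-root factorization to each tuple, use inclusion--exclusion for the void event, sum the root law over $r\in U$, combine disjointness of basins with $\Ee[\pi(B_q(r))]=\pi(r)$, and count $|V|-N_q$ edges in a rooted forest.
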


\begin{corollary}[Poisson-type concentration of roots]\label{cor:root-concentration}
For $U\subset V$, let $\mu_q(U):=\sum_{x\in U}\beta_q(x).$\\
For every family $(s_x)_{x\in U}$ of non-negative numbers, we have:
\begin{equation*}
  \Ee\left[
    \prod_{x\in U}\left(1+s_x\1_{\{x\in\Root\}}\right)
  \right]
  \leq
  \prod_{x\in U}(1+s_x\beta_q(x)).
\end{equation*}
Consequently, for every $\theta\geq0$, it results:
\begin{equation}\label{eq:root-mgf}
  \Ee[e^{\theta N_q(U)}]
  \leq
  \prod_{x\in U}\left(1+\beta_q(x)(e^\theta-1)\right)
  \leq
  \exp\left(\mu_q(U)(e^\theta-1)\right).
\end{equation}
For every $\delta>0$, one finds:
\begin{equation}\label{eq:root-chernoff}
  \Pp\left(N_q(U)\geq(1+\delta)\mu_q(U)\right)
  \leq
  \exp\left[-\mu_q(U)
  \bigl((1+\delta)\log(1+\delta)-\delta\bigr)\right].
\end{equation}
Moreover, one gets:
\begin{equation*}
  \Ee[(N_q(U))_k]
  \leq
  k!\,e_k\bigl((\beta_q(x))_{x\in U}\bigr)
  \leq\mu_q(U)^k.
\end{equation*}
\end{corollary}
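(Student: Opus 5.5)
The proof rests on Theorem~\ref{thm:spectral-domination}, specifically \eqref{eq:root-product-domination}. Since $s_x\geq0$, we expand the product as
\begin{equation*}
  \prod_{x\in U}\left(1+s_x\1_{\{x\in\Root\}}\right)=\sum_{A\subset U}\Bigl(\prod_{x\in A}s_x\Bigr)\1_{\{A\subset\Root\}}.
\end{equation*}
Taking expectations and applying \eqref{eq:root-product-domination} term by term is legitimate because every coefficient $\prod_{x\in A}s_x$ is nonnegative. This gives the bound $\sum_{A\subset U}\prod_{x\in A}s_x\beta_q(x)=\prod_{x\in U}(1+s_x\beta_q(x))$. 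The empty set contributes $1$ on both sides, which is consistent.

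For \eqref{eq:root-mgf}, take $s_x=e^\theta-1\geq0$. Since $\1_{\{x\in\Root\}}\in\{0,1\}$, we have $e^{\theta\1_{\{x\in\Root\}}}=1+(e^\theta-1)\1_{\{x\in\Root\}}$. Hence $e^{\theta N_q(U)}=\prod_{x\in U}(1+s_x\1_{\{x\in\Root\}})$, and the first inequality follows. The second inequality uses $1+u\leq e^u$ for each factor, with $u=\beta_q(x)(e^\theta-1)$.

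For \eqref{eq:root-chernoff}, apply Markov's inequality: for $\theta\geq0$,
\begin{equation*}
  \Pp(N_q(U)\geq(1+\delta)\mu)\leq\exp\bigl(\mu(e^\theta-1)-\theta(1+\delta)\mu\bigr),
\end{equation*}
where $\mu=\mu_q(U)$. Optimizing at $\theta=\log(1+\delta)$ gives exactly $-\mu((1+\delta)\log(1+\delta)-\delta)$. If $\mu=0$, the bound reads $\Pp\leq1$ and is trivial.

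For the factorial moments, note that $(N_q(U))_k$ equals the number of ordered $k$-tuples of distinct points of $\Root\cap U$. Therefore
\begin{equation*}
  \Ee[(N_q(U))_k]=\sum_{\text{ordered distinct}}\Pp(\{x_1,\dots,x_k\}\subset\Root)=k!\sum_{|A|=k,\,A\subset U}\Pp(A\subset\Root).
\end{equation*}
This is also the identity in Proposition~\ref{prop:root-moments}, combined with the symmetry of the event. Applying \eqref{eq:root-product-domination} gives $k!\,e_k(\beta)$. Finally, $k!\,e_k(\beta)\leq(\sum\beta)^k=\mu^k$: expanding the multinomial $(\sum_x\beta_q(x))^k$ with nonnegative terms, the tuples with distinct indices already contribute $k!\,e_k$.

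No step is a real obstacle once Theorem~\ref{thm:spectral-domination} is available. The only points to check are that all expansion coefficients are nonnegative, which is where $s_x\geq0$ and $\theta\geq0$ are needed, and the degenerate case $\mu_q(U)=0$.
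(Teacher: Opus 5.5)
Your proof is correct and follows the paper's argument essentially step by step. Both expand the product and apply \eqref{eq:root-product-domination} term by term using $s_x\ge 0$, then specialize to $s_x=e^\theta-1$ with $1+u\le e^u$. Both apply Markov's inequality at $\theta=\log(1+\delta)$, and expand $(N_q(U))_k$ over ordered distinct tuples to bound it by $k!\,e_k\le\mu_q(U)^k$.
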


\begin{remark}[Comparison with the Bernoulli decomposition]\label{rem:bernoulli-dpp}
Let $\kappa_1(U),\ldots,\kappa_{|U|}(U)$ be the eigenvalues of the restriction
$K_q[U]$. The determinantal theory gives the exact identity in law
\begin{equation*}
  N_q(U)\overset{d}=\sum_{j=1}^{|U|}\xi_j,
\end{equation*}
where the $\xi_j$ are independent Bernoulli variables with parameters $\kappa_j(U)$.
Consequently,
\begin{equation*}
  \Ee[e^{\theta N_q(U)}]
  =\det\bigl(I+(e^\theta-1)K_q[U]\bigr)
  =\prod_{j=1}^{|U|}\bigl(1+(e^\theta-1)\kappa_j(U)\bigr).
\end{equation*}
Thus Corollary~\ref{cor:root-concentration} is not sharper than the determinantal
description. Its role is to show what follows directly from the punctured factorization and
the global spectral gap, without diagonalizing the restricted kernel.
\end{remark}

\newpage
\section{Consequences of the hitting identity}\label{sec:hitting}

We now derive quantitative consequences of Theorem~\ref{thm:intro-hitting} for the
geometry of Wilson branches and massive forest components.

\subsection{Gaussian localization of components}

We start by recording the finite-volume interpretation and the infinite-volume counterpart of
Theorem~\ref{thm:intro-gaussian}.

\begin{remark}[Finite-volume correction]\label{rem:finite-volume-floor}
The term $n^{-1}$ in Inequation \eqref{eq:intro-gaussian-bound} is necessary on a finite graph since
$P_t(x,y)$ converges to $n^{-1}$. It produces the term $(q^2n)^{-1}$ in Inequation
\eqref{eq:intro-connectivity-gaussian}. On an infinite graph satisfying for all $x,y$ in $V$,
\[
  \forall t \geq 0\,,\quad P_t(x,y)\leq C_H(1+t)^{-\nu/2}
  \exp\left(-c_H\frac{d_G(x,y)^2}{1+t}\right),
\]
the same proof gives for all $x,y$ in $V$,
\begin{equation*}
  \Pp(x\leftrightarrow_q y)
  \leq C(1+d_G(x,y))^{4-\nu}
  e^{-c\sqrt q\,d_G(x,y)}.
\end{equation*}
\end{remark}

\subsection{Component sizes and susceptibility}

\begin{proposition}[Component sizes and susceptibility]\label{prop:components}
For every $x\in V$,
\begin{equation}\label{eq:mean-component}
  \Ee[\pi(C_q(x))]
  \leq
  \sum_{z\in V}h_q(x,z)
  \sum_{y\in V}\pi(y)h_q(y,z).
\end{equation}
Define $ \chi_q
  :=\sum_{x,y\in V}\pi(x)\pi(y)\Pp(x\leftrightarrow_q y).$
Then, it comes
\begin{equation}\label{eq:susceptibility}
  \chi_q
  \leq
  \sum_{z\in V}
  \left(\sum_{x\in V}\pi(x)h_q(x,z)\right)^2.
\end{equation}
If $M_q:=\max\{\pi(C) \mid C\text{ component of }F_q\}$
then, for every $a>0$, we get:
\begin{equation}\label{eq:largest-component}
  \Pp(M_q\geq a)
  \leq\frac{\chi_q}{a^2}
  \leq
  \frac{1}{a^2}
  \sum_{z\in V}
  \left(\sum_{x\in V}\pi(x)h_q(x,z)\right)^2.
\end{equation}
Finally, for $U\subset V$, it results:

\begin{equation}\label{eq:component-hits-set}
  \Pp(C_q(x)\cap U\neq\emptyset)
  \leq
  \sum_{y\in U}\sum_{z\in V}h_q(x,z)h_q(y,z).
\end{equation}
\end{proposition}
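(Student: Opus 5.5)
All four inequalities in Proposition~\ref{prop:components} follow from the connectivity bound \eqref{eq:intro-connectivity} of Theorem~\ref{thm:intro-hitting}, combined with linearity of expectation, Fubini and a union bound. Beyond Theorem~\ref{thm:intro-hitting}, no further probabilistic input is needed.

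For \eqref{eq:mean-component}, write $\pi(C_q(x))=\sum_{y\in V}\pi(y)\1_{\{x\leftrightarrow_q y\}}$ and take expectations. This gives $\Ee[\pi(C_q(x))]=\sum_y\pi(y)\Pp(x\leftrightarrow_q y)$. Inserting \eqref{eq:intro-connectivity} and exchanging the two finite sums over $y$ and $z$ produces the right-hand side. The diagonal term $y=x$ is harmless: there $\Pp=1$, while the bound contains $z=x$ with $h_q(x,x)^2=1$. For \eqref{eq:susceptibility}, apply the same substitution inside $\chi_q$. The double sum $\sum_{x,y}\pi(x)\pi(y)h_q(x,z)h_q(y,z)$ factorizes for each fixed $z$ as $\bigl(\sum_x\pi(x)h_q(x,z)\bigr)^2$.

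For \eqref{eq:largest-component}, the key deterministic identity is
\[
\chi_q=\Ee\Bigl[\sum_{C}\pi(C)^2\Bigr],
\]
where the sum runs over the components of $F_q$. It holds because $\sum_{x,y}\pi(x)\pi(y)\1_{\{x\leftrightarrow_q y\}}$ groups pairs according to their common component. Since $\sum_C\pi(C)^2\ge M_q^2$, Markov's inequality gives $\Pp(M_q\ge a)\le\Pp(\sum_C\pi(C)^2\ge a^2)\le\chi_q/a^2$. The second inequality in \eqref{eq:largest-component} is then \eqref{eq:susceptibility}.

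Finally, \eqref{eq:component-hits-set} is a union bound:
\[
\{C_q(x)\cap U\neq\emptyset\}=\bigcup_{y\in U}\{x\leftrightarrow_q y\},
\]
so its probability is at most $\sum_{y\in U}\Pp(x\leftrightarrow_q y)$. Applying \eqref{eq:intro-connectivity} to each term gives the claim.

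The only genuinely nontrivial input is therefore \eqref{eq:intro-connectivity} itself, which we take from Theorem~\ref{thm:intro-hitting}. Within the present argument, the one step needing care is the rewriting of $\chi_q$ as the expected sum of squared component masses, which is what makes Markov's inequality applicable to $M_q$.
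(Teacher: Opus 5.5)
Your proposal is correct and follows essentially the same route as the paper. Both arguments insert the connectivity bound \eqref{eq:intro-connectivity} and exchange the finite sums. The largest-component bound comes from $M_q^2\le\sum_C\pi(C)^2=\sum_{x,y}\pi(x)\pi(y)\1_{\{x\leftrightarrow_q y\}}$ and Markov's inequality, and the last inequality is a union bound over $y\in U$.
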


\subsection{Spatial consequences}

Let $d$ be a graph distance. For $R \geq 0$, put $B(x,R):=\{y\in V:d(x,y)\leq R\}.$\\
For $x\in V$, let $R_x^{\mathrm{raw}}:=\{z\in V:\tau_z\leq T_q\}$ be the range of the raw killed trajectory started at $x$.

\begin{proposition}[Range, radius, depth and diameter]\label{prop:spatial}
For every $x\in V$,
\begin{equation}\label{eq:raw-range}
  \Ee[\pi(R_x^{\mathrm{raw}})]
  =\sum_{z\in V}\pi(z)h_q(x,z),
\end{equation}
and therefore, we get:
\begin{equation}\label{eq:branch-size}
  \Ee[\pi(L_x)]
  \leq\sum_{z\in V}\pi(z)h_q(x,z).
\end{equation}
For $x\in V$, if $\operatorname{rad}(L_x):=\max_{y\in L_x}d(x,y),$
then, we find:
\begin{equation}\label{eq:radius}
  \Pp(\operatorname{rad}(L_x)>R)
  \leq h_q(x,B(x,R)^c)
\end{equation}
and hence, it comes:
\begin{equation}\label{eq:radius-es}
  \Pp(\operatorname{rad}(L_x)>R)
  \leq
  1-\frac{1}{\pi(x)}
  \Ee^{B(x,R)}_{\{x\}}\left[\pi(S_{T_q}^{B(x,R)})\right].
\end{equation}
Let $\operatorname{depth}_q(x)$ be the number of edges from $x$ to $\rho_q(x)$ in the
forest.\\
For every integer $m\geq0$, we get
\begin{equation}\label{eq:depth-tail}
  \Pp(\operatorname{depth}_q(x)\geq m)
  \leq\left(\frac{1}{1+q}\right)^m,
\end{equation} 
and also
\begin{equation}\label{eq:mean-depth}
  \Ee[\operatorname{depth}_q(x)]\leq\frac1q.
\end{equation}
Furthermore, for every $R\geq0$, it comes
\begin{equation}\label{eq:local-diameter}
  \Pp(\operatorname{diam}(C_q(x))\geq R)
  \leq
  \sum_{\substack{y\in V\\d(x,y)\geq R/2}}
  \sum_{z\in V}h_q(x,z)h_q(y,z),
\end{equation}
and also
\begin{equation}\label{eq:global-diameter}
  \Pp\bigl(\exists C:\operatorname{diam}(C)\geq R\bigr)
  \leq
  \sum_{\substack{x,y\in V\\d(x,y)\geq R}}
  \sum_{z\in V}h_q(x,z)h_q(y,z).
\end{equation}
\end{proposition}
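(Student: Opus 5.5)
We prove the items of Proposition~\ref{prop:spatial} one at a time. Each reduces either to Theorem~\ref{thm:intro-hitting} or to a direct property of Wilson's algorithm with killing.

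\emph{Range and branch size.} For \eqref{eq:raw-range}, write $\pi(R_x^{\mathrm{raw}})=\sum_z\pi(z)\1_{\{\tau_z\leq T_q\}}$ and take expectations. By definition $\Pp_x(\tau_z\leq T_q)=h_q(x,z)$, including the convention $h_q(x,x)=1$. For \eqref{eq:branch-size}, run Wilson's algorithm from $x$ first. Then $L_x$ is the loop-erasure of the killed trajectory, so $L_x\subset R_x^{\mathrm{raw}}$ pointwise. Summing \eqref{eq:intro-branch-hitting} against $\pi$ gives the same bound.

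\emph{Radius.} If $\operatorname{rad}(L_x)>R$, then some vertex of $L_x$ lies outside $B(x,R)$. Since $L_x\subset R_x^{\mathrm{raw}}$, the killed walk reaches $B(x,R)^c$ before $T_q$. Hence the probability is at most $\Pp_x(\tau_{B(x,R)^c}\leq T_q)=h_q(x,B(x,R)^c)$. Bound \eqref{eq:radius-es} then follows by applying \eqref{eq:intro-hitting-identity} with $U=B(x,R)^c$, since $x\notin U$.

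\emph{Depth.} This is the step needing a genuine argument, because the path from $x$ to its root is not the first branch unless $x$ is explored first. By order independence, we put $x$ first in the enumeration, so that $\operatorname{depth}_q(x)$ equals the number of edges of $L_x$. The number of edges of the loop-erasure is at most the number of jumps $J$ of the killed walk before $T_q$. The continuous-time chain jumps at rate $1$; I treat $P(x,x)>0$ as fictitious jumps, which only increases $J$. Each jump clock competes with the killing clock, so $\Pp(J\geq m)=(1/(1+q))^m$, which gives \eqref{eq:depth-tail}. Summing the tail gives
\[
\Ee[\operatorname{depth}_q(x)]\leq\sum_{m\geq1}(1+q)^{-m}=\frac1q,
\]
which is \eqref{eq:mean-depth}.

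\emph{Diameters.} For \eqref{eq:local-diameter}, suppose $\operatorname{diam}(C_q(x))\geq R$. Take $u,v\in C_q(x)$ with $d(u,v)\geq R$. By the triangle inequality, one of them, call it $y$, satisfies $d(x,y)\geq R/2$, and $y\leftrightarrow_q x$. A union bound over such $y$, combined with \eqref{eq:intro-connectivity}, gives the bound. For \eqref{eq:global-diameter}, a component of diameter at least $R$ contains a pair $x,y$ with $d(x,y)\geq R$ and $x\leftrightarrow_q y$. Again we take a union bound over pairs and apply \eqref{eq:intro-connectivity}.

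The main obstacle I expect is justifying the depth bound cleanly. One has to check two things: that the root path from $x$ is exactly the first loop-erased branch when $x$ is enumerated first, and that the count of loop-erased edges is dominated by the number of jumps before killing. Both are standard properties of Wilson's algorithm.
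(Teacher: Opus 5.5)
Your proposal is correct and follows the paper's proof essentially step for step. The raw range comes from linearity of expectation. The inclusion $L_x\subset R_x^{\mathrm{raw}}$ gives the branch-size and radius bounds, with the hitting identity applied for $U=B(x,R)^c$. The depth is dominated by the geometric number of rate-one clock rings before $T_q$. The two diameter estimates follow from union bounds combined with \eqref{eq:intro-connectivity}. Your explicit appeal to order-independence, to identify the path from $x$ to $\rho_q(x)$ with the first branch $L_x$, is a detail the paper leaves implicit but uses in the same way.
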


\subsection{Upper escape profile}

Let $U\subset V$, put $D_U=V\setminus U$, and define for $0\leq r\leq\pi(D_U),$
\begin{equation}\label{eq:escape-profile-definition}
  \Psi_U(r)
  :=
  \sup\{Q(A,U):A\subset D_U,\ \pi(A)\leq r\}.
\end{equation}
Let $\widehat\Psi_U$ be the smallest increasing concave majorant of $\Psi_U$.

\begin{theorem}[Escape-profile bound]\label{thm:escape-profile}
Let $x\notin U$, and let $u_{x,U}$ solve
\begin{equation}\label{eq:escape-comparison-ode}
  u_{x,U}'(t)
  =
  -\widehat\Psi_U(u_{x,U}(t)),
  \qquad
  u_{x,U}(0)=\pi(x).
\end{equation}
Then, we find:
\begin{equation}\label{eq:escape-bound}
  h_q(x,U)
  \leq
  H_q(x,U)
  :=1-\frac{q}{\pi(x)}\int_0^\infty e^{-qt}u_{x,U}(t)\dd t.
\end{equation}
For $x\in U$, set $H_q(x,U)=1$.
\end{theorem}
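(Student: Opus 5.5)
The plan is to start from the hitting identity \eqref{eq:intro-hitting-identity} of Theorem~\ref{thm:intro-hitting} with $D=D_U=V\setminus U$:
\[
  h_q(x,U)=1-\frac{1}{\pi(x)}\Ee^{D_U}_{\{x\}}\bigl[\pi(S^{D_U}_{T_q})\bigr]
  =1-\frac{q}{\pi(x)}\int_0^\infty e^{-qt}\,m(t)\dd t,
  \qquad m(t):=\Ee^{D_U}_{\{x\}}\bigl[\pi(S_t^{D_U})\bigr],
\]
where the second equality uses the independence of $T_q$ from the evolving set. Once the lower bound $m(t)\geq u_{x,U}(t)$ holds for all $t\geq0$, the estimate \eqref{eq:escape-bound} follows immediately, since the map $m\mapsto 1-\frac{q}{\pi(x)}\int e^{-qt}m$ is decreasing. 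The whole proof therefore reduces to a differential inequality for the mean mass of the punctured evolving set, followed by an ODE comparison.

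\textbf{Step 1: the mass loss rate.} From the construction in Section~\ref{sec:construction}, I will use that the punctured evolving set is a martingale-type object for the killed chain. Concretely, the mean mass changes only through the flow killed at the boundary:
\[
  \frac{\dd}{\dd t}\Ee\bigl[\pi(S_t^{D})\mid S_t=A\bigr]=-Q(A,D^c).
\]
This should come from the intertwining $\pi(S_t)=\sum_{y\in S_t}\pi(y)$, via the generator of the evolving set applied to $A\mapsto\pi(A)$. Equivalently, $\Ee_{\{x\}}[\pi(S^D_t)]=\pi(x)\Pp_x(\tau_{D^c}>t)$, which is consistent with the hitting identity. Since $\pi(S_t^{D_U})=\pi(x)\Pp_x(\tau_U>t)$ can also be derived directly from that identity (by Laplace inversion in $q$), I may simply use this formula. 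With $D^c=U$, the flow $Q(S_t,U)$ is at most $\Psi_U(\pi(S_t))\leq\widehat\Psi_U(\pi(S_t))$, because $S_t\subset D_U$. So
\[
  m'(t)=-\Ee\bigl[Q(S_t,U)\bigr]\geq-\Ee\bigl[\widehat\Psi_U(\pi(S_t))\bigr]\geq-\widehat\Psi_U(m(t)),
\]
where the last inequality is Jensen's inequality for the concave function $\widehat\Psi_U$. Here $\widehat\Psi_U$ is defined on $[0,\pi(D_U)]$, and $\pi(S_t)$ lies in that range.

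\textbf{Step 2: comparison.} We have $m(0)=\pi(x)=u(0)$, together with $m'\geq-\widehat\Psi_U(m)$ and $u'=-\widehat\Psi_U(u)$. Because $\widehat\Psi_U$ is increasing, a standard comparison argument gives $m\geq u$. Suppose $t_0$ is the first time at which $m<u$ on a right neighbourhood. To settle this cleanly I would compare with the solution $u_\varepsilon$ started from $\pi(x)-\varepsilon$ and let $\varepsilon\to0$. The subtlety is that $\widehat\Psi_U$ is concave, hence locally Lipschitz on the open interval, but possibly not Lipschitz at $0$. So uniqueness of $u$ may fail there, and the statement should be read with $u$ the maximal or appropriate solution. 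I expect the usual argument to work: monotonicity of $\widehat\Psi_U$ makes $u$ nonincreasing, and at value $0$ the function $m$ is trivially $\geq0$.

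The main obstacle is justifying Step 1 rigorously: that $t\mapsto m(t)$ is differentiable with derivative exactly $-\Ee[Q(S_t,U)]$, together with the identification of the killed-flow term in the evolving-set dynamics. I would try the route through the identity $m(t)=\pi(x)\Pp_x(\tau_U>t)$ first. In that case $m'(t)=-\sum_{y\in D_U,z\in U}\Pp_x(X_t=y,\tau_U>t)P(y,z)\pi(x)$. Expressing this back as $-\Ee[Q(S_t,U)]$ requires the evolving-set duality $\pi(x)\Pp_x(X_t=y,\tau_U>t)=\pi(y)\Pp_{\{x\}}(y\in S^{D_U}_t)$. This duality is the punctured analogue of the resolvent lift, and I take it from Section~\ref{sec:construction}.
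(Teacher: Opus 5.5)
Your proposal is correct and follows the paper's proof essentially step for step. The exit projection turns $h_q(x,U)$ into a Laplace transform of $m(t)=\Ee^{D_U}_{\{x\}}[\pi(S_t^{D_U})]$. The paper obtains the mass-loss rate $m'(t)=-\Ee[Q(S_t^{D_U},U)]$ from the one-step identity $\Ee[\pi(S_{n+1}^{D_U})\mid S_n^{D_U}=A]=\pi(A)-Q(A,U)$ plus Poissonization; your route via $m(t)=\pi(x)\Pp_x(\tau_U>t)$ and the singleton intertwining justifies it equally well. The rest matches: $Q(A,U)\le\widehat\Psi_U(\pi(A))$, then Jensen, then ODE comparison.

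Your uniqueness worry is unnecessary, because monotonicity of $\widehat\Psi_U$ alone gives $m\ge u$ for any solution $u$. On an interval where $u>m$ one has $(u-m)'\le\widehat\Psi_U(m)-\widehat\Psi_U(u)\le0$, so $u-m$ cannot become positive. In fact $\widehat\Psi_U(r)\le\lambda_U r$, so $\widehat\Psi_U$ is even $\lambda_U$-Lipschitz.
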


\begin{corollary}[Linear escape bound]\label{cor:linear-escape}
\begin{equation}\label{eq:linear-escape-rate}
  \lambda_U
  :=
  \sup_{a\in V\setminus U}P(a,U).
\end{equation} Then, for every $x\notin U$, it results:
\begin{equation}\label{eq:linear-escape-bound}
  h_q(x,U)
  \leq
  \frac{\lambda_U}{q+\lambda_U}.
\end{equation}
\end{corollary}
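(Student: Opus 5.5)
The plan is to apply Theorem~\ref{thm:escape-profile} with the linear majorant $\Psi_U(r)\le\lambda_U r$ and then solve the comparison ODE explicitly.

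\textbf{Step 1: linear bound on the escape profile.} For $A\subset D_U$, write the flow from $A$ to $U$ as
\[
Q(A,U)=\sum_{a\in A}\sum_{u\in U}\pi(a)P(a,u)=\sum_{a\in A}\pi(a)P(a,U)\le\lambda_U\,\pi(A).
\]
Hence $\Psi_U(r)\le\lambda_U r$ for $0\le r\le\pi(D_U)$. The map $r\mapsto\lambda_U r$ is increasing and concave, so it is an admissible majorant. Since $\widehat\Psi_U$ is the smallest such majorant, we get $\widehat\Psi_U(r)\le\lambda_U r$.

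\textbf{Step 2: ODE comparison.} Let $v(t)=\pi(x)e^{-\lambda_U t}$, which solves $v'=-\lambda_U v$ with $v(0)=\pi(x)$. Since $\widehat\Psi_U\le\lambda_U\,\mathrm{id}$, the solution $u=u_{x,U}$ satisfies $u'=-\widehat\Psi_U(u)\ge-\lambda_U u$. It follows that $(e^{\lambda_U t}u(t))'\ge0$, so $u(t)\ge\pi(x)e^{-\lambda_U t}=v(t)$ for all $t\ge0$.

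This is the delicate point. The ODE in Theorem~\ref{thm:escape-profile} may lack uniqueness, because $\widehat\Psi_U$ need not be Lipschitz at $0$. The integrating-factor argument, however, applies to any absolutely continuous solution, so I would phrase the comparison that way. One also needs $u$ to stay in the domain $[0,\pi(D_U)]$. This holds because $u$ is nonincreasing, as $\widehat\Psi_U\ge0$, and starts at $\pi(x)\le\pi(D_U)$. If $u$ reaches $0$, it stays there, since $\widehat\Psi_U(0)=0$ follows from $\Psi_U(0)=0$ and the linear majorant. That case cannot occur here, because $u\ge v>0$.

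\textbf{Step 3: integrate.} Using $u\ge v$ in the definition of $H_q(x,U)$ gives
\[
h_q(x,U)\le1-\frac{q}{\pi(x)}\int_0^\infty e^{-qt}u(t)\dd t
\le1-q\int_0^\infty e^{-(q+\lambda_U)t}\dd t
=1-\frac{q}{q+\lambda_U}=\frac{\lambda_U}{q+\lambda_U}.
\]
This is the claimed bound \eqref{eq:linear-escape-bound}.

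\textbf{Sanity check.} The bound can also be verified directly. Exits to $U$ occur at instantaneous rate $P(X_t,U)\le\lambda_U$. Therefore $\Pp_x(\tau_U\le T_q)$ is at most the probability that a rate-$\lambda_U$ clock rings before an independent rate-$q$ clock, which equals $\lambda_U/(q+\lambda_U)$.
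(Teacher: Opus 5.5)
Your proof is correct and is essentially the paper's argument. Both rest on $Q(A,U)\le\lambda_U\pi(A)$ and the resulting lower bound $\pi(x)e^{-\lambda_U t}$. The difference is that you feed the bound through $\widehat\Psi_U$ and compare the ODE solution $u_{x,U}$ of Theorem~\ref{thm:escape-profile}, whereas the paper inserts it directly into the mass derivative \eqref{eq:escape-mass-derivative} and applies Gronwall to $m(t)=\Ee^{D_U}_{\{x\}}[\pi(S_t^{D_U})]$, which avoids Jensen and the nonlinear comparison.

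Your uniqueness worry is unnecessary: Step~1, concavity and $\widehat\Psi_U(0)=0$ already make $\widehat\Psi_U$ Lipschitz with constant $\lambda_U$. Your ``sanity check'' is itself a complete, evolving-set-free proof once you make it precise by thinning the rate-one jump clock at rate $\lambda_U$.
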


\begin{corollary}[Use of deterministic upper bounds]\label{cor:replace-h}
Let $H_q(x,U)\geq h_q(x,U)$ for every $x$ and $U$. Every upper bound obtained from
non-negative sums and products of $h_q$ remains valid after replacing $h_q$ by $H_q$.
Identities involving $h_q$ give the corresponding upper inequalities. For example,
\begin{equation*}
  \forall x,y \in V\,,\qquad \Pp(x\leftrightarrow_q y)
  \leq\sum_{z\in V}H_q(x,z)H_q(y,z),
\end{equation*}
and
\begin{equation*}
    \forall x \in V\,,\qquad  \Ee[\pi(R_x^{\mathrm{raw}})]
  \leq\sum_{z\in V}\pi(z)H_q(x,z).
\end{equation*}
\end{corollary}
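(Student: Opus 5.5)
The statement to prove is Corollary~\ref{cor:replace-h}. The whole argument is monotonicity, and I would not touch evolving sets at all. The point is that every bound in Section~\ref{sec:hitting} has the form
\[
  \text{(probability or expectation)}\;\leq\;\Phi\bigl((h_q(x,U))_{x,U}\bigr)
\]
(or the same with $=$ in place of $\leq$), where $\Phi$ is built from the variables $h_q(x,U)$ using only sums and products with non-negative coefficients. The weights $\pi(z)$ and the indicator restrictions on the summation ranges are such coefficients. Any function of this type is coordinatewise non-decreasing on $[0,\infty)^{N}$.

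I would first state this as a small lemma, proved by induction on the construction of the expression. Constants and single variables are trivially monotone. If $f,g\geq 0$ are non-decreasing, then so is $f+g$, and so is $fg$, since
\[
  f(a)g(a)\leq f(b)g(a)\leq f(b)g(b) \quad\text{for } a\leq b \text{ coordinatewise}.
\]
Here we use that $h_q\in[0,1]$ and $H_q\geq h_q\geq 0$, so every argument lies in the non-negative orthant.

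Next I would apply the lemma with $a=(h_q)$ and $b=(H_q)$. An inequality $X\leq\Phi(h_q)$ then gives $X\leq\Phi(h_q)\leq\Phi(H_q)$. An identity $X=\Phi(h_q)$ gives the one-sided bound $X\leq\Phi(H_q)$; only the upper inequality survives, because $\Phi(H_q)$ may strictly exceed $\Phi(h_q)$.

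The two displayed examples then follow directly:
\begin{itemize}
  \item The connectivity bound comes from applying this to \eqref{eq:intro-connectivity}, with $\Phi=\sum_z h_q(x,z)h_q(y,z)$.
  \item The range bound comes from the identity \eqref{eq:raw-range}, with $\Phi=\sum_z\pi(z)h_q(x,z)$.
\end{itemize}

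The only real care point is scope. The statement covers bounds obtained from non-negative sums and products, so expressions such as the one-minus form in \eqref{eq:intro-hitting-identity}, or \eqref{eq:radius-es}, are excluded, since $1-(\cdot)$ is not monotone increasing. I would note this explicitly. In the radius bound \eqref{eq:radius}, $h_q$ enters linearly with coefficient $1$, so it is covered. Finally, the convention $H_q(x,U)=1$ for $x\in U$ from Theorem~\ref{thm:escape-profile} matches $h_q(x,U)=1$ there, so the hypothesis $H_q\geq h_q$ holds on the diagonal as well.
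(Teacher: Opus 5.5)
Your proposal is correct and takes essentially the paper's approach. The paper omits this proof, saying only that it follows from the monotonicity of non-negative sums and products; your induction showing that such expressions are coordinatewise non-decreasing on the non-negative orthant, applied to \eqref{eq:intro-connectivity} and to the identity \eqref{eq:raw-range}, fills in exactly those omitted details.
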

The proof follows immediately from the monotonicity of non-negative sums and products, and we omit the details.
\newpage
\section{Examples}\label{sec:examples}

\subsection{The complete graph}

Exact distributional results for massive spanning forests on the complete graph are
obtained in~\cite{DAchilleEnriquezMelotti2026}. The computation below is included as an
explicit verification of the root factorization.\\
Fix $ n\geq 1$, let $V=\{1,\ldots,n\}$ and let
\[ \forall x ,y \in V\,,\qquad
  P(x,y)=\frac{1}{n-1}\1_{\{y\neq x\}}.
\]
We work with the simple random walk on the complete graph. We take $\pi\equiv1$. For $x\neq y$, the hitting time of $y$ is exponential with
parameter $1/(n-1)$. It follows that:
\begin{equation*}
  \forall x,y \in V\,,\quad h_q(x,y)=\frac{1}{1+(n-1)q}.
\end{equation*}
Put $\alpha_n(q):=\frac{1}{1+(n-1)q}$, then, find that for $x\neq y$,
\begin{equation*}
  \Pp(x\leftrightarrow_q y)
  \leq2\alpha_n(q)+(n-2)\alpha_n(q)^2,
\end{equation*}
and
\begin{equation*}
  \Ee[|C_q(x)|]
  \leq\bigl(1+(n-1)\alpha_n(q)\bigr)^2
  \leq\left(1+\frac1q\right)^2.
\end{equation*}
The root factors can be computed exactly. Let $D\subset V$ have cardinality $m \in [[1,n]]$ and fix
$x\in D$. The killed transition semigroup satisfies
\begin{equation*}
  \forall x \in V\,,\forall t\geq 0\,,\qquad \Pp_x(X_t=x,\ t<\tau_{D^c})
  =\frac1m e^{-\frac{n-m}{n-1}t}
  +\left(1-\frac1m\right)e^{-\frac{n}{n-1}t}.
\end{equation*}
Hence, if $r_m(q):=\Pp^D_{\{x\}}(x\in S_{T_q}^D),$ 
then, we find that:
\begin{equation*}
  r_m(q)
  =q\left[
  \frac1m\frac{1}{q+\frac{n-m}{n-1}}
  +\left(1-\frac1m\right)
  \frac{1}{q+\frac{n}{n-1}}
  \right].
\end{equation*}
Put $a=(n-1)q$. Then, one gets:
\begin{equation*}
  r_m(q)
  =\frac{a(a+n-m+1)}{(a+n-m)(a+n)}.
\end{equation*}
Consequently, for every $A\subset V$ with $|A|=k$, where $k \in  [[1,n]]$, we get:
\begin{equation*}
  \Pp(A\subset\Root)
  =\prod_{j=0}^{k-1}r_{n-j}(q)
  =\frac{a^{k-1}(a+k)}{(a+n)^k}.
\end{equation*}
In particular, it comes:
\begin{equation*}
  \Pp(x\in\Root)=\frac{a+1}{a+n},
  \qquad
  \Pp(x,y\in\Root)=\frac{a(a+2)}{(a+n)^2}.
\end{equation*}
The root kernel has eigenvalue $1$ on constant functions and eigenvalue $a/(a+n)$ on
the orthogonal complement. Therefore, we have:
\begin{equation*}
  N_q\overset{d}=1+\operatorname{Bin}\left(n-1,\frac{a}{a+n}\right).
\end{equation*}

\subsection{Discrete tori}

Let $V=(\mathbb Z/N\mathbb Z)^d $ 
and let $P$ be the nearest-neighbour kernel. We take $\pi\equiv1$ and denote the graph
distance by $d_N$. The jump-count argument gives, in every dimension,
\begin{equation*}
   \forall x,y \in V\,,\quad  h_q(x,y)\leq(1+q)^{-d_N(x,y)}.
\end{equation*}
Consequently, one finds:
\begin{equation*}
   \forall x \in V\,,\quad \Ee[|C_q(x)|]\leq C_dq^{-2d},\qquad 0<q\leq1.
\end{equation*}
This estimate uses only the minimal number of jumps and is not on the diffusive scale.

The standard Gaussian heat-kernel estimate on the torus has the form
\begin{equation*}
  \forall t\geq 0\,,\,\forall x,y \in V\,,\quad  P_t(x,y)
  \leq
  \frac{C_d}{(1+t)^{d/2}}
  \exp\left(-c_d\frac{d_N(x,y)^2}{1+t}\right)
  +\frac{C_d}{N^d};
\end{equation*}
see, for instance, the general graph estimates in \cite{Delmotte1999}. Therefore, when
$d>4$, Theorem~\ref{thm:intro-gaussian} gives
\begin{equation*}
   \forall x,y \in V\,,\quad h_q(x,y)
  \leq C_d\left[
  (1+d_N(x,y))^{2-d}e^{-c_d\sqrt q\,d_N(x,y)}
  +\frac{1}{qN^d}
  \right],
\end{equation*}
\begin{equation*}
    \forall x,y \in V\,,\quad \Pp(x\leftrightarrow_q y)
  \leq C_d\left[
  (1+d_N(x,y))^{4-d}e^{-c_d\sqrt q\,d_N(x,y)}
  +\frac{1}{q^2N^d}
  \right],
\end{equation*}
and
\begin{equation*}
   \forall x \in V\,,\quad \Ee[|C_q(x)|]\leq C_dq^{-2}.
\end{equation*}
The improvement from $q^{-2d}$ to $q^{-2}$ comes from the Gaussian scale of the killed
walk and is independent of the dimension once $d>4$.

\subsection{Two graphs connected by one edge}

Let $V=V_1\sqcup V_2$, and assume the only edge between the two connected weighted
graphs is $\{a,b\}$ with $a\in V_1$ and $b\in V_2$. Put $c_{ab}=\pi(a)P(a,b)=\pi(b)P(b,a)$, the conductance between $V_1$ and $V_2$. Take $U=V_2$. For $A\subset V_1$, we find:
\[
  Q(A,V_2)=c_{ab}\1_{\{a\in A\}}.
\]
Hence, we get :
\begin{equation*}
  \Psi_{V_2}(r)
  =\begin{cases}
  0,&0\leq r<\pi(a),\\
  c_{ab},&r\geq\pi(a).
  \end{cases}
\end{equation*}
A simple increasing concave majorant is
\begin{equation*}
  \forall r \geq 0\,,\quad \widehat\Psi_{V_2}(r)
  =\min\left\{\frac{c_{ab}}{\pi(a)}r,c_{ab}\right\}.
\end{equation*}
The linear escape bound gives, for $x\in V_1$,
\begin{equation*}
  h_q(x,V_2)
  \leq\frac{P(a,b)}{q+P(a,b)}.
\end{equation*}
The estimate depends only on the leakage through the bridge.

\subsection{The hypercube}
Fix $m\geq 1$. Let $V=\{0,1\}^m$ and let $P$ flip one uniformly chosen coordinate. Write $d_H$ for
the Hamming distance and again put $a_q=(1+q)^{-1}$. Then, we find:
\begin{equation*}
 \forall x,y \in V\,,\quad h_q(x,y)\leq a_q^{d_H(x,y)}.
\end{equation*}
Since, we have:
\begin{equation*}
  \sum_{z\in V}a_q^{d_H(x,z)}=(1+a_q)^m,
\end{equation*}
we obtain
\begin{equation*}
 \forall x \in V\,,\quad  \Ee[|C_q(x)|]\leq(1+a_q)^{2m}
\end{equation*}
and, it follows that:
\begin{equation*}
  \chi_q\leq2^m(1+a_q)^{2m}.
\end{equation*}
If $R=d_H(x,y)$, then, 
\begin{equation*}
 \forall x,y \in V\,,\quad \Pp(x\leftrightarrow_q y)
  \leq(2a_q)^R(1+a_q^2)^{m-R}.
\end{equation*}

\subsection{A determinant-free root bound on expanders}

Assume that $\bar\pi$ is uniform, write $n=|V|$, and suppose that
$\gamma\geq\gamma_0>0$. Then $\beta_q(x)$ does not depend on $x$ and it is easy to check that:
\begin{equation*}
  \beta_q=\frac{nq+\gamma}{n(q+\gamma)}
  \qquad \text{and}\qquad
  \mu_q(V)=\frac{nq+\gamma}{q+\gamma}.
\end{equation*}
If $q=\vartheta/n$, then we find that:
\begin{equation*}
  \Ee[e^{\theta N_{\vartheta/n}}]
  \leq
  \exp\left[
    \left(1+\frac{\vartheta}{\gamma_0}\right)(e^\theta-1)
  \right],
  \qquad \theta\geq0.
\end{equation*}
Thus $(N_{\vartheta/n})_n$ is tight and has exponential upper tails uniformly in $n$.
This conclusion also follows, more precisely, from the Bernoulli decomposition of the
restricted determinantal kernel. The estimate here is included because it follows directly from the Wilson--evolving-set factorization and the global spectral gap. More explicitly, if we define $M_0$ by
\[
  M_0:=1+\frac{\vartheta}{\gamma_0},
\]
then, for every integer $m\geq M_0$, we obtain
\begin{equation*}
  \Pp(N_{\vartheta/n}\geq m)
  \leq\left(\frac{eM_0}{m}\right)^m.
\end{equation*}

\newpage
\section{The punctured Wilson--evolving-set coupling}\label{sec:construction}

\subsection{Ordinary evolving sets in a punctured domain}

Let $D\subset V$ be non-empty and put $\tau_{D^c}:=\inf\{t\geq0:X_t\notin D\}.$\\
For $A\subset D$ and $y\in D$, define
\begin{equation*}
  Q_D(A,y):=\sum_{a\in A}\pi(a)P(a,y).
\end{equation*}
Let $(U_n)_{n\geq1}$ be independent uniform random variables on $[0,1]$. The discrete-time evolving set in $D$ is the process $(S_n^D)_{n\geq0}$ defined by
\begin{equation}\label{eq:punctured-evolving-set-definition}
  \forall n \geq 0\,,\quad S_{n+1}^D
  =\left\{y\in D:
  \frac{Q_D(S_n^D,y)}{\pi(y)}\geq U_{n+1}\right\},
\end{equation}
It is a Markov chain on subsets of $D$.\\ 
Let $(N_t)_{t\geq0}$ be an independent rate-one Poisson process. 
The
continuous-time process is obtained by rate-one Poissonization:
\begin{equation*}
 \forall t\geq 0\,,\qquad S_t^D:=S_{N_t}^D.
\end{equation*}
It is a continuous-time Markov chain.
\begin{lemma}[Punctured intertwining]\label{lem:punctured-intertwining}
For every $A\subset D$, every $y\in D$, we have:
\begin{equation}\label{eq:punctured-intertwining}
  \forall t\geq0\,,\quad\Pp_A^D(y\in S_t^D)
  =\frac{1}{\pi(y)}
  \sum_{a\in A}\pi(a)
  \Pp_a(X_t=y,\ t<\tau_{D^c}).
\end{equation}
In particular, for $A=\{x\}$, we get:
\begin{equation}\label{eq:singleton-intertwining}
  \forall t\geq0\,,\quad\Pp_{\{x\}}^D(y\in S_t^D)
  =\frac{\pi(x)}{\pi(y)}
  \Pp_x(X_t=y,\ t<\tau_{D^c}).
\end{equation}
\end{lemma}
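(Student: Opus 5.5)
The plan is to prove the discrete-time version first and then Poissonize. Let $P_D$ denote the sub-Markov kernel $P$ restricted to $D\times D$. The claim is that for every $n\geq0$,
\begin{equation*}
  \Pp_A^D(y\in S_n^D)=\frac{1}{\pi(y)}\sum_{a\in A}\pi(a)P_D^n(a,y).
\end{equation*}
Equivalently, the function $g_n(y):=\pi(y)\Pp_A^D(y\in S_n^D)$ equals $(\pi\1_A)P_D^n(y)$, where $\pi\1_A$ is viewed as a row vector. The argument is an induction on $n$ driven by the martingale property of the evolving-set update.

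\textbf{Base case.} For $n=0$ this is trivial, since $S_0^D=A$ and $P_D^0=I$.

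\textbf{One-step identity.} The key observation is the behaviour of a single update given $S_n^D=B$. By \eqref{eq:punctured-evolving-set-definition}, a point $y\in D$ belongs to $S_{n+1}^D$ exactly when $U_{n+1}\leq Q_D(B,y)/\pi(y)$. Reversibility gives
\begin{equation*}
  Q_D(B,y)/\pi(y)=\sum_{b\in B}P(y,b)\leq1,
\end{equation*}
so this ratio is a genuine probability. Hence
\begin{equation*}
  \pi(y)\Pp(y\in S_{n+1}^D\mid S_n^D=B)=Q_D(B,y)=\sum_{b\in D}\pi(b)\1_B(b)P(b,y).
\end{equation*}

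\textbf{Inductive step.} Take expectations over $B=S_n^D$. This yields
\begin{equation*}
  g_{n+1}(y)=\sum_{b\in D}\Ee[\1_{S_n^D}(b)]\,\pi(b)P(b,y)=\sum_{b\in D}g_n(b)P(b,y)=(g_nP_D)(y),
\end{equation*}
which closes the induction. The essential point is that the update is linear in the indicator $\1_B$, so taking expectations commutes with it.

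\textbf{Poissonization.} Since $S_t^D=S_{N_t}^D$ with $N_t$ independent, we get
\begin{equation*}
  \pi(y)\Pp_A^D(y\in S_t^D)=\sum_n e^{-t}\frac{t^n}{n!}\,(\pi\1_AP_D^n)(y)=(\pi\1_A e^{-t(I-P_D)})(y).
\end{equation*}
It remains to identify $e^{-t(I-P_D)}(a,y)$ with $\Pp_a(X_t=y,\ t<\tau_{D^c})$. The continuous-time chain is the discrete chain run at the jumps of an independent rate-one Poisson clock. The event $\{X_t=y,\,t<\tau_{D^c}\}$ requires that the first $N_t$ jumps of the embedded chain all stay in $D$, and the probability of this path event is exactly $P_D^{N_t}(a,y)$. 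Conditioning on $N_t$ then gives the same Poisson mixture. The singleton formula \eqref{eq:singleton-intertwining} follows by taking $A=\{x\}$.

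\textbf{Main obstacle.} The only delicate point is the bookkeeping around the killing. One must check that restricting the sum to $y\in D$ in the evolving-set rule corresponds precisely to discarding trajectories that jump to $D^c$, with no mass returning. This holds because $Q_D$ sums only over $a\in B\subset D$ while $y$ ranges over $D$. Everything else is linear algebra.
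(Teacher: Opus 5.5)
Your proposal is correct and follows essentially the same route as the paper. Both arguments run a discrete-time induction using the one-step inclusion probability $Q_D(B,y)/\pi(y)$ and linearity of expectation, then pass to continuous time by rate-one Poissonization with $X_t=Y_{N_t}$. Your reversibility check that $Q_D(B,y)/\pi(y)=P(y,B)\leq 1$ is a small detail that the paper leaves implicit.
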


\subsection{The transformed evolving set and the marked trajectory}

Let $K_D$ be the one-step Markov transition operator of the ordinary discrete-time evolving set on subsets of
$D$. For non-empty $A,B\subset D$, define
\begin{equation}\label{eq:transformed-kernel-definition}
  \wh K_D(A,B):=K_D(A,B)\frac{\pi(B)}{\pi(A)}.
\end{equation}
Since $\sum_{B\subset D}K_D(A,B)\pi(B)
  =\pi(A)-Q(A,D^c)\leq\pi(A),$ the transition operator $\wh K_D$ is substochastic. We complete it by a cemetery state $\dagger$.\\
Define the link from non-empty subsets of $D$ to $D$ by
\begin{equation}\label{eq:link-definition}
\forall A \subset V\,, \forall y \in D\,,\qquad  \Lambda(A,y)
  :=\frac{\pi(y)}{\pi(A)}\1_{\{y\in A\}}.
\end{equation}
Let $P_D$ be the substochastic restriction of $P$ to $D$.

\begin{lemma}[Process-level punctured intertwining]\label{lem:process-intertwining}
One has the following intertwining:
\begin{equation}\label{eq:matrix-intertwining}
  \wh K_D\Lambda=\Lambda P_D.
\end{equation}
Consequently, there exists a Markovian coupling
$(\wh S_n^D,\wh X_n^D)_{n\geq0}$ such that, when
$\wh S_0^D=\{x\}$ and $\wh X_0^D=x$,
\begin{enumerate}[label=\textup{(\roman*)}]
  \item $(\wh S_n^D)$ has transition kernel $\wh K_D$;
  \item conditionally on $\wh S_n^D=A\neq\dagger$,
  \[
    \Pp(\wh X_n^D=y\mid\wh S_n^D=A)=\Lambda(A,y);
  \]
  \item $(\wh X_n^D)$ has the law of the $P$-chain up to its first exit from $D$.
\end{enumerate}
After rate-one Poissonization, the same assertions hold in continuous time.
\end{lemma}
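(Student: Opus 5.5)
The plan is to verify the matrix identity \eqref{eq:matrix-intertwining} entrywise from the one-step law of the ordinary evolving set, and then build the coupling by the Diaconis--Fill recipe.

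\textbf{The intertwining.} Fix a non-empty $A\subset D$ and $y\in D$. By \eqref{eq:transformed-kernel-definition} and \eqref{eq:link-definition},
\begin{equation*}
  (\wh K_D\Lambda)(A,y)
  =\sum_{B\ni y}K_D(A,B)\frac{\pi(B)}{\pi(A)}\frac{\pi(y)}{\pi(B)}
  =\frac{\pi(y)}{\pi(A)}\Pp\bigl(y\in S_1^D\mid S_0^D=A\bigr).
\end{equation*}
From \eqref{eq:punctured-evolving-set-definition}, and since $Q_D(A,y)/\pi(y)\leq 1$ by reversibility, we have $\Pp(y\in S_1^D\mid S_0^D=A)=Q_D(A,y)/\pi(y)$. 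Hence
\begin{equation*}
  (\wh K_D\Lambda)(A,y)=\frac{1}{\pi(A)}\sum_{a\in A}\pi(a)P(a,y)=(\Lambda P_D)(A,y).
\end{equation*}
Both sides are sub-probability vectors in $y$ with the same total mass. Namely,
\begin{equation*}
  \sum_{y\in D}(\Lambda P_D)(A,y)=1-\frac{Q(A,D^c)}{\pi(A)},
\end{equation*}
which is exactly the total mass of $\wh K_D(A,\cdot)$ on non-empty sets. The missing mass corresponds to exit from $D$ on one side and to $\dagger$ on the other. I would also check that $B=\emptyset$ does not contribute: its weight $\pi(B)$ vanishes, so the empty set effectively feeds the cemetery.

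\textbf{The coupling.} Following Diaconis--Fill, given $(\wh S_n^D,\wh X_n^D)=(A,x)$ with $\Lambda(A,x)>0$, I would use the transition
\begin{equation*}
  \mathcal K\bigl((A,x),(B,y)\bigr)
  :=\frac{P_D(x,y)\,\wh K_D(A,B)\,\Lambda(B,y)}{(\Lambda P_D)(A,y)}
\end{equation*}
when $(\Lambda P_D)(A,y)>0$. The leftover mass $1-\sum_y P_D(x,y)$, which is exit from $D$, is sent to $\dagger$ in both coordinates.

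\textbf{Checks.} I would then verify three things.
\begin{enumerate}[label=(\alph*)]
  \item Summing $\mathcal K$ over $B$ gives $P_D(x,y)$, by the intertwining. So the $X$-marginal is the $P_D$-chain, killed at exit.
  \item Averaging $\mathcal K$ over $x\sim\Lambda(A,\cdot)$ and summing over $y$ gives $\wh K_D(A,B)$, by the intertwining again. This uses $\sum_y\Lambda(B,y)=1$.
  \item The conditional law of $\wh X_{n+1}^D$ given the whole $\wh S$-path is $\Lambda(\wh S_{n+1}^D,\cdot)$.
\end{enumerate}
Check (c) is the standard Diaconis--Fill induction, started from $\Lambda(\{x\},\cdot)=\delta_x$. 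It also shows that $(\wh S_n^D)$ is autonomously Markov with kernel $\wh K_D$, which is item (i).

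\textbf{Simultaneous killing.} The main obstacle is making the cemeteries agree: the set process must die exactly when the walk exits. Under $\Lambda$-averaging, the killing probability of the walk is $Q(A,D^c)/\pi(A)$, which equals the defect of $\wh K_D(A,\cdot)$. So I would kill both coordinates together with the walk's exit. This is consistent with (ii) because the two marginal defect masses match.

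\textbf{Continuous time.} I would subordinate both coordinates to the same rate-one Poisson clock. Then $e^{t(\wh K_D-I)}\Lambda=\Lambda e^{t(P_D-I)}$ follows from the discrete identity term by term.
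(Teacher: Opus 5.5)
Your proposal is correct and follows the same route as the paper. You verify $\wh K_D\Lambda=\Lambda P_D$ entrywise from the one-step inclusion probability $Q_D(A,y)/\pi(y)$. You then use the Diaconis--Fill kernel $\mathcal K\bigl((A,x),(B,y)\bigr)=P_D(x,y)\,\wh K_D(A,B)\Lambda(B,y)/(\Lambda P_D)(A,y)$ with the same two marginal checks, followed by rate-one Poissonization. Your explicit joint cemetery, matched to the defect $Q(A,D^c)/\pi(A)$, and your induction conditioning on the whole set path fill in details that the paper only sketches.
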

For non-empty $A\subset D$, we have:
\begin{equation}\label{eq:transformed-marginal}
  \forall t\geq0\,,\qquad\wh\Pp^D_{\{x\}}(\wh S_t^D=A)
  =\Pp^D_{\{x\}}(S_t^D=A)\frac{\pi(A)}{\pi(x)},
\end{equation}
For $t\geq 0$, conditionally on $\wh S_t^D=A$, the marked point has law $\pi$ restricted to $A$. It follows:
\begin{equation}\label{eq:marked-marginal}
  \forall y \in D\,,\quad\Pp_x(\wh X_t^D=y)
  =\frac{\pi(y)}{\pi(x)}\Pp^D_{\{x\}}(y\in S_t^D)
  =\Pp_x(X_t=y,\ t<\tau_{D^c}).
\end{equation}

\subsection{The punctured Wilson--evolving-set object}

For $D\subset V$ and $x\in D$, define
\begin{equation*}
  \mathcal W(D,x)
  :=\bigl(\wh S_t^D,\wh X_t^D,S_t^D,T_q\bigr)_{t\geq0}.
\end{equation*}
It has two projections. The final-point projection is
\begin{equation}\label{eq:final-point-projection}
  \Pp_x(\wh X_{T_q}^D=y)
  =\frac{\pi(y)}{\pi(x)}
  \Pp^D_{\{x\}}(y\in S_{T_q}^D).
\end{equation}
The survival projection is
\begin{equation}\label{eq:survival-projection}
  \Pp_x(T_q<\tau_{D^c})
  =\frac{1}{\pi(x)}
  \Ee^D_{\{x\}}[\pi(S_{T_q}^D)].
\end{equation}
Equivalently, we get:
\begin{equation}\label{eq:exit-projection}
  \Pp_x(\tau_{D^c}\leq T_q)
  =1-\frac{1}{\pi(x)}
  \Ee^D_{\{x\}}[\pi(S_{T_q}^D)].
\end{equation}

To insert the coupling into Wilson's algorithm, suppose that a partial forest with vertex
set $W$ has already been constructed, let $x\notin W$, and take $D=V\setminus W$.
The marked trajectory has the law of the chain started from $x$ and stopped when it exits
$D$. If it exits before $T_q$, the Wilson branch attaches to the existing forest. If
$T_q$ occurs first, the marked position at time $T_q$ is the root of the new component.
In both cases, the chronological loop-erasure of the marked trajectory is added to the
forest.

The cemetery state records that an exit occurred. To retain the exact vertex of the
existing forest which was hit, if the last point inside $D$ is $u$, append a vertex
$z\in D^c$ with conditional law
\begin{equation*}
  \Pp(z\mid u,\ X_1\notin D)
  =\frac{P(u,z)}{P(u,D^c)}.
\end{equation*}
Thus the marked process carries the exact stopped Wilson trajectory.

\newpage
\section{Proofs}\label{sec:proofs}

\subsection{Proof of the punctured intertwining}
\begin{proof}[Proof of Lemma~\ref{lem:punctured-intertwining}]
Let $(Y_n)_{n\geq0}$ be the discrete-time Markov chain with transition
kernel $P$, and put $  \tau_{D^c}^{\mathrm d}
  :=\inf\{n\geq0:Y_n\notin D\}.$\\
By the definition of the
punctured evolving set in Equality~\eqref{eq:punctured-evolving-set-definition}, we find:
\begin{equation*}
\forall n \geq 0\,,\qquad \Pp(y\in S_{n+1}^D\mid S_n^D=A)
  =
  \frac{Q_D(A,y)}{\pi(y)}
  =
  \frac{1}{\pi(y)}
  \sum_{z\in A}\pi(z)P(z,y).
\end{equation*}
We prove by induction discrete-time counterpart of
Equality~\eqref{eq:punctured-intertwining}, so that, for every $n\geq0$,
\begin{equation*}
  \Pp_A^D(y\in S_n^D)
  =
  \frac{1}{\pi(y)}
  \sum_{x\in A}\pi(x)
  \Pp_x(Y_n=y,\ n<\tau_{D^c}^{\mathrm d}).
\end{equation*}
The identity is immediate at time zero. Assume that
the identity holds at time $n\geq 1$. Then, 
\begin{align*}
  \Pp_A^D(y\in S_{n+1}^D)
  &=
  \frac{1}{\pi(y)}
  \sum_{z\in D}\pi(z)P(z,y)
  \Pp_A^D(z\in S_n^D)\\
  &=
  \frac{1}{\pi(y)}
  \sum_{x\in A}\pi(x)
  \sum_{z\in D}
  \Pp_x(Y_n=z,\ n<\tau_{D^c}^{\mathrm d})P(z,y)\quad\text{by the induction hypothesis.}\\
  &=
  \frac{1}{\pi(y)}
  \sum_{x\in A}\pi(x)
  \Pp_x(Y_{n+1}=y,\ n+1<\tau_{D^c}^{\mathrm d}).
\end{align*}
This proves the discrete-time identity.\\
Let $(N_t)_{t\geq0}$ be the independent rate-one Poisson process used
in the definition of the continuous-time evolving set. Since for all $t\geq 0$, 
$S_t^D=S_{N_t}^D$ and $(X_t)_{t\geq0}$ has the same law as
$(Y_{N_t})_{t\geq0}$, we obtain Equality~\eqref{eq:punctured-intertwining}, indeed, we find for all $y \in D,$ for all $A \subset D$:
\begin{align*}
 \forall t\geq0\,,\quad \Pp_A^D(y\in S_t^D)
  &=
  \sum_{n=0}^{\infty}
  \Pp(N_t=n)\Pp_A^D(y\in S_n^D)\\
  &=
  \frac{1}{\pi(y)}
  \sum_{x\in A}\pi(x)
  \sum_{n=0}^{\infty}
  \Pp(N_t=n)
  \Pp_x(Y_n=y,\ n<\tau_{D^c}^{\mathrm d})\\
  &=
  \frac{1}{\pi(y)}
  \sum_{x\in A}\pi(x)
  \Pp_x(X_t=y,\ t<\tau_{D^c}),
\end{align*}
\end{proof}

\subsection{Proof of the process-level coupling}
\begin{proof}[Proof of Lemma~\ref{lem:process-intertwining}]
We first prove Equality~\eqref{eq:matrix-intertwining}. For every
non-empty $A\subset D$ and every $y\in D$, Definitions
\eqref{eq:transformed-kernel-definition} and \eqref{eq:link-definition}
give
\begin{align*}
  (\wh K_D\Lambda)(A,y)
  &=
  \sum_{B\neq\emptyset}
  K_D(A,B)\frac{\pi(B)}{\pi(A)}
  \frac{\pi(y)}{\pi(B)}\1_{\{y\in B\}}\\
  &=
  \frac{\pi(y)}{\pi(A)}
  \Pp(y\in S_1^D\mid S_0^D=A)\\
  &=
  \frac{1}{\pi(A)}
  \sum_{x\in A}\pi(x)P(x,y) \quad  \text{By Equality~\eqref{eq:punctured-evolving-set-definition}} \\
  &=
  (\Lambda P_D)(A,y).
\end{align*}
We now construct the coupling. After adjoining a common cemetery state,
define, whenever $(\Lambda P_D)(A,y)>0$,
\begin{equation*}
  \mathcal K\bigl((A,x),(B,y)\bigr)
  :=
  P_D(x,y)
  \frac{\wh K_D(A,B)\Lambda(B,y)}
       {(\Lambda P_D)(A,y)}.
\end{equation*}
By Equality~\eqref{eq:matrix-intertwining}, one finds:
\begin{equation*}
  \sum_B
  \mathcal K\bigl((A,x),(B,y)\bigr)
  =
  P_D(x,y).
\end{equation*}
Thus the marked coordinate has transition kernel $P_D$. Moreover, if
the conditional law of the current marked point given the current set
$A$ is $\Lambda(A,\cdot)$, then
\begin{align*}
  \sum_{x\in A}
  \Lambda(A,x)
  \mathcal K\bigl((A,x),(B,y)\bigr)
  &=
  \wh K_D(A,B)\Lambda(B,y)
  \frac{\sum_{x\in A}\Lambda(A,x)P_D(x,y)}
       {(\Lambda P_D)(A,y)}\\
  &=
  \wh K_D(A,B)\Lambda(B,y),
\end{align*}
Consequently, the set
coordinate has transition kernel $\wh K_D$, and conditionally on the
new set $B$, the new marked point has law $\Lambda(B,\cdot)$. This
proves the discrete-time coupling. Rate-one Poissonization gives the
continuous-time coupling and preserves both marginals.
\end{proof}
Iterating Definition~\eqref{eq:transformed-kernel-definition} and then
Poissonizing gives Equality~\eqref{eq:transformed-marginal}. Using the
conditional law given by the link, we prove
Equality~\eqref{eq:marked-marginal}. Indeed, we obtain for $x,y\in D$,
\begin{align*}
  \forall t\geq 0\,,\quad \Pp_x(\wh X_t^D=y)
  &=
  \sum_{A\neq\emptyset}
  \wh\Pp_{\{x\}}^D(\wh S_t^D=A)\Lambda(A,y)\\
  &=
  \frac{\pi(y)}{\pi(x)}
  \Pp_{\{x\}}^D(y\in S_t^D)\\
  &=
  \Pp_x(X_t=y,\ t<\tau_{D^c}) \quad \text{by Equality~\eqref{eq:singleton-intertwining}}.
\end{align*}
We now average explicitly at the independent exponential time $T_q$.
Summing Equality~\eqref{eq:singleton-intertwining} over $y\in D$ gives for all $t\geq 0,$
\begin{equation}\label{egalitpreuve}
  \forall x \in V\,,\quad \Ee_{\{x\}}^D[\pi(S_t^D)]
  =
  \pi(x)\Pp_x(t<\tau_{D^c}).
\end{equation}
Therefore, we find:
\begin{align*}
  \Ee_{\{x\}}^D[\pi(S_{T_q}^D)]
  &=
  q\int_0^\infty
  e^{-qt}\Ee_{\{x\}}^D[\pi(S_t^D)]\dd t\\
  &=
  \pi(x)q\int_0^\infty
  e^{-qt}\Pp_x(t<\tau_{D^c})\dd t \quad \text{by Equality (\ref{egalitpreuve})},\\
  &=
  \pi(x)\Pp_x(T_q<\tau_{D^c})\quad  \text{by independence of $T_q$ and the
chain}.
\end{align*}
Dividing by $\pi(x)$ proves
Equality~\eqref{eq:survival-projection}, and taking complements proves
Equality~\eqref{eq:exit-projection}.

\subsection{Proofs of the root identities}

\begin{proof}[Proof of the root-law part of Theorem~\ref{thm:intro-roots}]
We first prove Equality~\eqref{eq:intro-root-law}. Run Wilson's
algorithm starting from $x$ in V. At the first step, the punctured domain is
$D=V$. If the walk is killed at time $T_q$, the root of the component
containing $x$ is its marked position $\wh X_{T_q}^V$. Therefore,
Equality~\eqref{eq:final-point-projection} gives Equality~\eqref{eq:intro-root-law} :
$$
\forall r \in V \,,\quad   \Pp(\rho_q(x)=r) =
  \Pp_x(\wh X_{T_q}^V=r)=
  \frac{\pi(r)}{\pi(x)}
  \Pp_{\{x\}}(r\in S_{T_q}).
$$
We now prove Equality~\eqref{eq:intro-basin-law}. Fix $r\in V$, by the definition of
$B_q(r)$ and Equality~\eqref{eq:intro-root-law}, we find:
$$
  \Ee[\pi(B_q(r))]
  =
  \sum_{x\in V}\pi(x)\Pp(\rho_q(x)=r)
  =
  \pi(r)
  \sum_{x\in V}\Pp_{\{x\}}(r\in S_{T_q}).
$$
For every time $t\geq0$,
Equality~\eqref{eq:singleton-intertwining}, applied with $D=V$, and
the invariance of $\pi$ give
$$
  \sum_{x\in V}\Pp_{\{x\}}(r\in S_t) =
  \frac{1}{\pi(r)}
  \sum_{x\in V}\pi(x)P_t(x,r)=1.
$$
Since this identity holds for every $t\geq0$, it also holds at the
independent time $T_q$. Substitution into the preceding equality gives
Equality~\eqref{eq:intro-basin-law}.
\end{proof}

\begin{proof}[Proof of the multi-root part of Theorem~\ref{thm:intro-roots}]
We prove Equality~\eqref{eq:intro-multiroot}. Run Wilson's algorithm
in an order beginning with $x_1,\ldots,x_k$. By the chain rule, the following equality holds:
\begin{equation*}
  \Pp(x_1,\ldots,x_k\in\Root)
  =
  \prod_{i=1}^k
  \Pp\bigl(
    x_i\in\Root
    \mid x_1,\ldots,x_{i-1}\in\Root
  \bigr).
\end{equation*}
On the event $\{x_1,\ldots,x_{i-1}\in\Root\}$, the Wilson branch
started from $x_j$, for each $j<i$, is killed at $x_j$. Since the
corresponding path starts and ends at $x_j$, its chronological
loop-erasure is the singleton $\{x_j\}$. Consequently, after the first
$i-1$ steps, the already constructed vertex set is exactly $A_{i-1}=\{x_1,\ldots,x_{i-1}\},$
and the next Wilson walk evolves in
$D_{i-1}=V\setminus A_{i-1}$.\\
For each $i$, the vertex $x_i$ becomes a root if and only if this walk is killed at
$x_i$. Hence, by Equality~\eqref{eq:final-point-projection}, applied
with $D=D_{i-1}$ and $y=x_i$,
\begin{equation*}
  \Pp\bigl(
    x_i\in\Root
    \mid x_1,\ldots,x_{i-1}\in\Root
  \bigr)
  =
  \Pp_{\{x_i\}}^{D_{i-1}}
  \bigl(x_i\in S_{T_q}^{D_{i-1}}\bigr).
\end{equation*}
Substituting these identities into the chain-rule factorization proves
Equality~\eqref{eq:intro-multiroot}.
\end{proof}

\begin{proof}[Proof of Corollary~\ref{cor:abstract-transfer}]
By the exponential-time averaging used in the proof of
Equality~\eqref{eq:survival-projection},
\begin{equation*}
\forall x \in V \,,\quad   \Pp^D_{\{x\}}(x\in S_{T_q}^D)
  =
  \int_0^\infty
  qe^{-qt}\Pp^D_{\{x\}}(x\in S_t^D)\dd t.
\end{equation*}
The assumed estimate
\eqref{eq:abstract-transfer-assumption} therefore gives
\begin{equation*}
 \forall x \in V \,,\quad  \Pp^D_{\{x\}}(x\in S_{T_q}^D)
  \leq
  \int_0^\infty qe^{-qt}G_D(t,x)\dd t.
\end{equation*}
Applying this estimate to every factor in
Equality~\eqref{eq:intro-multiroot} proves
Equality~\eqref{eq:abstract-transfer-bound}.
\end{proof}

\subsection{Proof of the determinantal comparison}

\begin{proof}[Proof of Proposition~\ref{prop:determinantal-comparison}]
We prove Equality~\eqref{eq:determinantal-comparison}. Put $ M:=qI+L,$
and, for $D\subset V$, let $M_D$ be the principal restriction of $M$
to $D$. By Definition~\eqref{eq:dirichlet-generator-definition}, we have $M_D=qI_D+L_D.$ Since $q>0$, each matrix $M_D$ is invertible.\\
By the multi-root factorization
\eqref{eq:intro-multiroot} and the punctured resolvent identity
\eqref{eq:punctured-resolvent}, we find:
$$
 \forall  A=\{x_1,\dots,x_k\}\subset V\,,\quad  \Pp(A\subset\Root)
  =
  \prod_{i=1}^k
  \Pp_{\{x_i\}}^{D_{i-1}}
  \bigl(x_i\in S_{T_q}^{D_{i-1}}\bigr) =
  \prod_{i=1}^k
  q\bigl(M_{D_{i-1}}^{-1}\bigr)(x_i,x_i).
$$
For each $i\in[[1,k]]$, Cramer's rule gives
\begin{equation*}
  q\bigl(M_{D_{i-1}}^{-1}\bigr)(x_i,x_i)
  =
  q\frac{\det M_{D_{i-1}\setminus\{x_i\}}}
          {\det M_{D_{i-1}}},
\end{equation*}
and since $D_i=D_{i-1}\setminus\{x_i\}$, $D_0 = V$ and  $d D_k=V\setminus A$, the product telescopes:
$$
  \prod_{i=1}^k
  q\bigl(M_{D_{i-1}}^{-1}\bigr)(x_i,x_i)
  =
  q^k
  \prod_{i=1}^k
  \frac{\det M_{D_i}}{\det M_{D_{i-1}}}
  =
  q^k\frac{\det M_{V\setminus A}}{\det M}.
$$
On the other hand, Jacobi's complementary minor identity gives
\begin{equation*}
  \det\bigl((M^{-1})[A]\bigr)
  =
  \frac{\det M_{V\setminus A}}{\det M}.
\end{equation*}
By Definition~\eqref{eq:root-kernel-definition}, $K_q=qM^{-1}$.
Therefore, it comes:
$$
  \det K_q[A]
  =
  q^k\det\bigl((M^{-1})[A]\bigr) 
  =
  q^k\frac{\det M_{V\setminus A}}{\det M}.
$$
Thus, we find Equality~\eqref{eq:determinantal-comparison} :
\begin{equation*}
  \Pp(A\subset\Root)
  =
  \det K_q[A]
  =
  \prod_{i=1}^k
  q(qI+L_{D_{i-1}})^{-1}(x_i,x_i),
\end{equation*}
\end{proof}
\subsection{Proofs of the quantitative root bounds}
\begin{proof}[Proof of Corollary~\ref{cor:dirichlet-transfer}]
We first prove Equality~\eqref{eq:dirichlet-root-bound}. For $t\geq 0,$ let $P_t^D:=e^{-tL_D}$ be the semigroup of the chain killed upon leaving $D$ subset of $V$. By the punctured
intertwining, see Equation \eqref{eq:punctured-intertwining}, applied with
$A=\{x\}$ and $y=x$, we find
\begin{equation*}
  \forall t\geq 0\,,\qquad\Pp^D_{\{x\}}(x\in S_t^D)
  =
  P_t^D(x,x).
\end{equation*}
Since $L_D$ is self-adjoint on $L^2(D,\pi)$ and its spectrum is
contained in $[\lambda_1(D),\infty)$, the spectral theorem gives for any $t\geq 0$:
$$
  P_t^D(x,x)
  =
  \frac{
    \left\langle
      \1_{\{x\}},e^{-tL_D}\1_{\{x\}}
    \right\rangle_\pi
  }{\pi(x)} 
  \leq
  e^{-\lambda_1(D)t}
  \frac{\norm{\1_{\{x\}}}_{L^2(\pi)}^2}{\pi(x)}   =
  e^{-\lambda_1(D)t}.
$$
Using the exponential-time averaging established in the proof of
Equality~\eqref{eq:survival-projection}, we obtain
$$
  \Pp^D_{\{x\}}(x\in S_{T_q}^D)
  \leq
  q\int_0^\infty
  e^{-(q+\lambda_1(D))t}\dd t=
  \frac{q}{q+\lambda_1(D)}.
$$
Applying this estimate to every factor in
Equality~\eqref{eq:intro-multiroot} proves
Equality~\eqref{eq:dirichlet-root-bound}.\\
Finally, the Dirichlet Cheeger inequality $ \lambda_1(D)\geq\frac{\phi_D^2}{2}$ implies
\begin{equation*}
  \frac{q}{q+\lambda_1(D)}
  \leq
  \frac{q}{q+\phi_D^2/2}.
\end{equation*}
Substituting this estimate into
Equality~\eqref{eq:dirichlet-root-bound} proves
Equality~\eqref{eq:cheeger-root-bound}.
\end{proof}

\begin{proof}[Proof of Theorem~\ref{thm:spectral-domination}]
We first establish a uniform estimate for each punctured factor. For
$D\subset V$, $x\in D$ and $t\geq0$, we have:
$$
  P_t^D(x,x)
  =
  \Pp_x(X_t=x,\ t<\tau_{D^c})
  \leq
  \Pp_x(X_t=x)
  =
  P_t(x,x).
$$
Choose an orthonormal eigenbasis
$(\varphi_j)_{1\leq j\leq|V|}$ of $L$ in
$L^2(\bar\pi)$ such that $\varphi_1\equiv1$, and write $ 0=\lambda_1<\lambda_2=\gamma
  \leq\cdots\leq\lambda_{|V|}.$ The spectral expansion of the diagonal heat kernel gives
\begin{equation*}
\forall t\geq 0\,,\forall x\in V\,,\qquad  P_t(x,x)
  =
  \bar\pi(x)
  +
  \bar\pi(x)
  \sum_{j=2}^{|V|}
  e^{-\lambda_jt}\varphi_j(x)^2.
\end{equation*}
At time zero, this identity becomes
\begin{equation*}
  1
  =
  \bar\pi(x)
  +
  \bar\pi(x)
  \sum_{j=2}^{|V|}\varphi_j(x)^2.
\quad \text{Therefore, we find}\quad 
  \bar\pi(x)
  \sum_{j=2}^{|V|}\varphi_j(x)^2
  =
  1-\bar\pi(x).
\end{equation*}
Since $\lambda_j\geq\gamma$ for every $j\geq2$, it follows that for $t\geq 0:$
\begin{equation*}
 \forall x\in V\,,\qquad  P_t(x,x)
  \leq
  \bar\pi(x)
  +
  \bigl(1-\bar\pi(x)\bigr)e^{-\gamma t}.
\end{equation*}
Using the punctured intertwining, see Equation \eqref{eq:punctured-intertwining} and exponential-time averaging, we
obtain
\begin{align*}
 \forall x\in V\,,\quad  \Pp^D_{\{x\}}(x\in S_{T_q}^D)
  &\leq
  q\int_0^\infty
  e^{-qt}
  \left[
    \bar\pi(x)
    +
    \bigl(1-\bar\pi(x)\bigr)e^{-\gamma t}
  \right]\dd t\\
  &=
  \bar\pi(x)
  +
  \bigl(1-\bar\pi(x)\bigr)\frac{q}{q+\gamma} =
  \beta_q(x) \quad \text{by Definition~\eqref{eq:beta-definition}.}
\end{align*}
Together with the Dirichlet
estimate proved in Corollary~\ref{cor:dirichlet-transfer}, this gives
\begin{equation*}
 \forall x\in V\,,\quad  \Pp^D_{\{x\}}(x\in S_{T_q}^D)
  \leq
  \min\left\{
    \beta_q(x),
    \frac{q}{q+\lambda_1(D)}
  \right\}.
\end{equation*}
Applying this estimate to every factor in
Equality~\eqref{eq:intro-multiroot} proves
Equality~\eqref{eq:combined-root-product}. Since
\begin{equation*}
  \forall i \in [[1,k]]\,,\qquad\min\left\{
    \beta_q(x_i),
    \frac{q}{q+\lambda_1(D_{i-1})}
  \right\}
  \leq\beta_q(x_i),
\end{equation*}
Equality~\eqref{eq:root-product-domination} follows immediately.
\end{proof}
\begin{proof}[Proof of Proposition~\ref{prop:root-moments}]
For every $U\subset V$, the definition of the falling factorial gives
\begin{equation*}
  (N_q(U))_k
  =
  \sum_{\substack{x_1,\ldots,x_k\in U\\\mathrm{distinct}}}
  \1_{\{x_1,\ldots,x_k\in\Root\}}.
\end{equation*}
Taking expectations and applying the multi-root factorization
\eqref{eq:intro-multiroot} to every ordered $k$-tuple gives
\begin{equation*}
  \Ee[(N_q(U))_k]
  =
  \sum_{\substack{x_1,\ldots,x_k\in U\\\mathrm{distinct}}}
  \prod_{i=1}^k
  \Pp^{V\setminus A_{i-1}}_{\{x_i\}}
  \bigl(x_i\in S_{T_q}^{V\setminus A_{i-1}}\bigr).
\end{equation*}
Taking $U=V$ proves the first factorial-moment identity. For the void event, inclusion-exclusion gives
$$
  \1_{\{\Root\cap U=\emptyset\}} =
  \prod_{x\in U}
  \left(1-\1_{\{x\in\Root\}}\right) =
  \sum_{A\subset U}
  (-1)^{|A|}\1_{\{A\subset\Root\}}.
$$
Taking expectations proves Equality~\eqref{eq:void-event}. We now prove Equality~\eqref{eq:root-in-set}. Summing the root law
\eqref{eq:intro-root-law} over $r\in U$ gives
$$
  \Pp(\rho_q(x)\in U)
  =
  \sum_{r\in U}\Pp(\rho_q(x)=r)=
  \frac{1}{\pi(x)}
  \sum_{r\in U}
  \pi(r)\Pp_{\{x\}}(r\in S_{T_q})=
  \frac{1}{\pi(x)}
  \Ee_{\{x\}}[\pi(S_{T_q}\cap U)].
$$
We find Equality~\eqref{eq:root-in-set}. Besides, since the basins $(B_q(r))_{r\in V}$ are pairwise disjoint,
\begin{equation*}
  \pi(B_q(U))
  =
  \sum_{r\in U}\pi(B_q(r)).
\end{equation*}
Taking expectations and applying
Equality~\eqref{eq:intro-basin-law} gives Equality~\eqref{eq:collective-basin} :
\begin{equation*}
  \Ee[\pi(B_q(U))]
  =
  \sum_{r\in U}\Ee[\pi(B_q(r))]
  =
  \sum_{r\in U}\pi(r)
  =
  \pi(U),
\end{equation*}
Finally, a forest on $V$ with $N_q$ components has exactly
$|V|-N_q$ edges. Hence, $|E(F_q)|=|V|-N_q.$
Taking expectations proves the last assertion.
\end{proof}
\begin{proof}[Proof of Corollary~\ref{cor:root-concentration}]
Expanding the product and applying
Equality~\eqref{eq:root-product-domination}, we obtain
\begin{align*}
  \Ee\left[
    \prod_{x\in U}
    \left(1+s_x\1_{\{x\in\Root\}}\right)
  \right]
  =
  \sum_{A\subset U}
  \left(\prod_{x\in A}s_x\right)
  \Pp(A\subset\Root) &\leq
  \sum_{A\subset U}
  \prod_{x\in A}s_x\beta_q(x)\\
  &=
  \prod_{x\in U}
  \bigl(1+s_x\beta_q(x)\bigr).
\end{align*}
Take $s_x=e^\theta-1$ for every $x\in U$. Since, we have $ \prod_{x\in U}
  \left(
    1+(e^\theta-1)\1_{\{x\in\Root\}}
  \right)
  =
  e^{\theta N_q(U)},$
the preceding estimate gives
\begin{equation*}
  \Ee[e^{\theta N_q(U)}]
  \leq
  \prod_{x\in U}
  \left(1+\beta_q(x)(e^\theta-1)\right).
\end{equation*}
Using $1+u\leq e^u$ and the definition of $\mu_q(U)$ proves
Equality~\eqref{eq:root-mgf}. \\
For every $\theta\geq0$, Markov's inequality and
Equality~\eqref{eq:root-mgf} give
$$
  \Pp\left(
    N_q(U)\geq(1+\delta)\mu_q(U)
  \right)
  \leq
  \exp\left(
    \mu_q(U)(e^\theta-1)
    -
    \theta(1+\delta)\mu_q(U)
  \right).
$$
Choosing $\theta=\log(1+\delta)$ gives
Equality~\eqref{eq:root-chernoff}.\\
Finally, the factorial-moment identity in
Proposition~\ref{prop:root-moments} and
Equality~\eqref{eq:root-product-domination} give
\begin{align*}
  \Ee[(N_q(U))_k]
  &\leq
  \sum_{\substack{x_1,\ldots,x_k\in U\\\mathrm{distinct}}}
  \prod_{i=1}^k\beta_q(x_i) =
  k!\,e_k\bigl((\beta_q(x))_{x\in U}\bigr)\\
  &\leq
  \left(\sum_{x\in U}\beta_q(x)\right)^k =
  \mu_q(U)^k.
\end{align*}
This proves the last assertion.
\end{proof}

\subsection{Proofs of the hitting and forest estimates}

\begin{proof}[Proof of the hitting identity in Theorem~\ref{thm:intro-hitting}]
Let $D:=V\setminus U$. Then $D^c=U$ and hence $\tau_{D^c}=\tau_U.$
Applying the exit projection \eqref{eq:exit-projection} to this domain
gives Equality~\eqref{eq:intro-hitting-identity}:
$$
  \forall x \in V\,,\quad h_q(x,U)
  =
  \Pp_x(\tau_U\leq T_q) =
  \Pp_x(\tau_{D^c}\leq T_q)=
  1-\frac{1}{\pi(x)}
  \Ee_{\{x\}}^{V\setminus U}
  \left[
    \pi\left(S_{T_q}^{V\setminus U}\right)
  \right].
$$
\end{proof}

\begin{proof}[Proof of the forest bounds in Theorem~\ref{thm:intro-hitting}]
If $y\in L_x$, then the raw trajectory started from $x$ visits $y$
before it is killed. Therefore, it proves Inequality~\eqref{eq:intro-branch-hitting} :
\begin{equation*}
  \Pp(y\in L_x)
  \leq
  \Pp_x(\tau_y\leq T_q)
  =
  h_q(x,y).
\end{equation*}
To prove the connectivity bound, run Wilson's algorithm first from
$x$. Conditionally on $L_x$, if $y\notin L_x$, its Wilson walk joins
the component of $x$ only if it hits $L_x$ before it is killed. If
$y\in L_x$, the same upper bound holds since $\tau_{L_x}=0$. Thus, we find:
$$
  \Pp(x\leftrightarrow_q y\mid L_x) \leq
  \Pp_y(\tau_{L_x}\leq T_q) \leq
  \sum_{z\in L_x}\Pp_y(\tau_z\leq T_q) =
  \sum_{z\in L_x}h_q(y,z),
$$
where the second inequality follows from the union bound. Taking
expectations and applying Inequality~\eqref{eq:intro-branch-hitting}
gives Inequality~\eqref{eq:intro-connectivity}:
$$
  \Pp(x\leftrightarrow_q y) \leq
  \Ee\left[
    \sum_{z\in L_x}h_q(y,z)
  \right] =
  \sum_{z\in V}
  \Pp(z\in L_x)h_q(y,z)\leq
  \sum_{z\in V}h_q(x,z)h_q(y,z).
$$
\end{proof}

\begin{lemma}[Resolvent representation of hitting]\label{lem:resolvent-hitting-ratio}
For $x,y$ in $V$, let
\begin{equation*}
G_q(x,y)
  :=
  \int_0^\infty e^{-qt}P_t(x,y)\dd t.
\end{equation*}
Then, we find
\begin{equation}\label{eq:resolvent-hitting-ratio}
  h_q(x,y)
  =
  \frac{G_q(x,y)}{G_q(y,y)}.
\end{equation}
\end{lemma}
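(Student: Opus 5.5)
The plan is the classical last-exit (strong Markov) decomposition of the resolvent at the hitting time of $y$, with the killing time $T_q$ absorbed into the exponential weight. First I rewrite $G_q$ probabilistically. By Fubini and independence of $T_q$,
\begin{equation*}
  G_q(x,y)=\Ee_x\left[\int_0^\infty e^{-qt}\1_{\{X_t=y\}}\dd t\right]
  =\frac1q\,\Pp_x(X_{T_q}=y),
\end{equation*}
although only the first expression is needed.

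Next I apply the strong Markov property of the continuous-time chain at the stopping time $\tau_y$. Since $X_t\neq y$ for $t<\tau_y$, the integral only collects time after $\tau_y$. Substituting $t=\tau_y+s$ on $\{\tau_y<\infty\}$ gives
\begin{equation*}
  G_q(x,y)=\Ee_x\left[e^{-q\tau_y}\1_{\{\tau_y<\infty\}}
  \int_0^\infty e^{-qs}\1_{\{X_{\tau_y+s}=y\}}\dd s\right]
  =\Ee_x\left[e^{-q\tau_y}\right]G_q(y,y).
\end{equation*}
The second equality holds because, conditionally on $\mathcal F_{\tau_y}$, the shifted process is the chain started at $X_{\tau_y}=y$. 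The factor $e^{-q\tau_y}$ is $\mathcal F_{\tau_y}$-measurable, with the convention $e^{-q\tau_y}=0$ when $\tau_y=\infty$.

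Then I identify $\Ee_x[e^{-q\tau_y}]$ with $h_q(x,y)$. Since $T_q$ is independent of the chain and $\Pp(T_q\geq s)=e^{-qs}$, conditioning on the chain gives $\Pp_x(\tau_y\leq T_q)=\Ee_x[e^{-q\tau_y}]$. For $x=y$ both sides equal $1$, which matches the convention $h_q(y,y)=1$.

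To conclude, I divide by $G_q(y,y)$, which is strictly positive because $P_t(y,y)\geq e^{-t}>0$. The only point requiring care is the strong Markov step: the integrand is non-negative and measurable, so Fubini applies, and the chain on a finite state space is a càdlàg strong Markov process. Alternatively, the same identity can be obtained by discretizing through the Poissonization $X_t=Y_{N_t}$ and using the discrete strong Markov property, but the direct continuous-time argument suffices. No step should be a serious obstacle.
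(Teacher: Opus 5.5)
Your proof is correct and follows essentially the same route as the paper. Like the paper, you use independence of $T_q$ to get $h_q(x,y)=\Ee_x[e^{-q\tau_y}]$, write $G_q(x,y)$ as an expected discounted occupation time via Tonelli, factor it at $\tau_y$ by the strong Markov property, and divide by $G_q(y,y)>0$.
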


\begin{proof}
Put $\tau_y:=\tau_{\{y\}}$, with the convention
$e^{-q\tau_y}=0$ on $\{\tau_y=\infty\}$. Since $T_q$ is independent
of the chain,
$$
  h_q(x,y)
  =
  \Pp_x(\tau_y\leq T_q)=
  \Ee_x\left[
    \Pp(T_q\geq\tau_y\mid\tau_y)
  \right]=
  \Ee_x[e^{-q\tau_y}].
$$
Moreover, by Tonelli's theorem, we have:
\begin{equation*}
  G_q(x,y)
  =
  \Ee_x\left[
    \int_0^\infty
    e^{-qt}\1_{\{X_t=y\}}\dd t
  \right].
\end{equation*}
Before time $\tau_y$, the integrand vanishes. The strong Markov
property at $\tau_y$ therefore gives
\begin{align*}
  G_q(x,y)
  &=
  \Ee_x\left[
    \1_{\{\tau_y<\infty\}}e^{-q\tau_y}
    \Ee_y\left[
      \int_0^\infty
      e^{-qs}\1_{\{X_s=y\}}\dd s
    \right]
  \right]\\
  &=
  \Ee_x[e^{-q\tau_y}]G_q(y,y)=
  h_q(x,y)G_q(y,y).
\end{align*}
Since $G_q(y,y)>0$, division by $G_q(y,y)$ proves
Equality~\eqref{eq:resolvent-hitting-ratio}.
\end{proof}

\begin{lemma}[Polynomial convolution]\label{lem:polynomial-convolution}
Assume \eqref{eq:intro-volume-growth} with $\nu>4$, and put for $x,y$ in $V$:
\begin{equation*}
  k(x,y):=(1+d_G(x,y))^{2-\nu}.
\end{equation*}
For every fixed $c>0$, there exists $C<\infty$, depending only on
$c$, $C_V$ and $\nu$, such that, for every $x,y\in V$ and
$0<q\leq1$,
\begin{equation}\label{eq:kernel-mass}
  \sum_{z\in V}
  k(x,z)e^{-c\sqrt q\,d_G(x,z)}
  \leq
  \frac{C}{q},
\end{equation}
and
\begin{equation}\label{eq:polynomial-convolution}
  \sum_{z\in V}k(x,z)k(y,z)
  \leq
  C(1+d_G(x,y))^{4-\nu}.
\end{equation}
\end{lemma}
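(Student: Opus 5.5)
Both estimates follow from a dyadic shell decomposition around a base point, using only the volume growth bound \eqref{eq:intro-volume-growth}. For $j\geq0$, set $A_j(x):=\{z: 2^{j}-1\leq d_G(x,z)<2^{j+1}-1\}$. By \eqref{eq:intro-volume-growth}, $|A_j(x)|\leq|B(x,2^{j+1})|\leq C_V 2^{(j+1)\nu}$, and on $A_j(x)$ we have $1+d_G(x,z)\asymp 2^j$.

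For \eqref{eq:kernel-mass}, the sum over $A_j$ is bounded by
\[
C\,2^{j\nu}2^{j(2-\nu)}e^{-c\sqrt q\,(2^j-1)}=C\,4^j e^{-c\sqrt q\,(2^j-1)}.
\]
Summing over $j$, the terms with $2^j\leq q^{-1/2}$ form a geometric series in $4^j$, so they contribute at most $C q^{-1}$. For $2^j=2^m q^{-1/2}$ with $m\geq1$, the term is at most $C q^{-1}4^m e^{-c2^m}$ (up to the harmless factor $e^{c\sqrt q}\leq e^{c}$, since $q\leq1$), and $\sum_m 4^m e^{-c2^m}<\infty$. This gives $C/q$, with $C$ depending only on $c,C_V,\nu$.

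For \eqref{eq:polynomial-convolution}, set $R:=d_G(x,y)$. If $R\leq 1$, I bound $k(y,z)\leq 2^{\nu-2}k(x,z)$ (using the triangle inequality, $1+d_G(x,z)\leq 2(1+d_G(y,z))$), and then
\[
\sum_z k(x,z)^2\leq C\sum_j 2^{j\nu}2^{2j(2-\nu)}=C\sum_j 2^{j(4-\nu)}<\infty,
\]
because $\nu>4$. For $R>1$, I split $V$ into three regions.
\begin{enumerate}[label=\textup{(\roman*)}]
\item On $Z_1=\{z:d_G(x,z)\leq R/2\}$, the triangle inequality gives $d_G(y,z)\geq R/2$, so $k(y,z)\leq C(1+R)^{2-\nu}$. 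The remaining sum is $\sum_{z\in Z_1}k(x,z)\leq C\sum_{2^j\lesssim R}4^j\leq C(1+R)^2$. The product is $C(1+R)^{4-\nu}$.
\item On $Z_2=\{z:d_G(y,z)\leq R/2\}$, the same argument applies with $x$ and $y$ swapped.
\item On $Z_3$, where both distances exceed $R/2$, I use $k(x,z)k(y,z)\leq k(x,z)^2$ whenever $d_G(x,z)\leq d_G(y,z)$, and the symmetric bound otherwise. This bounds the $Z_3$ contribution by $2\sum_{d_G(x,z)>R/2}k(x,z)^2$. By the shell estimate, this is at most $C\sum_{2^j\gtrsim R}2^{j(4-\nu)}\leq C(1+R)^{4-\nu}$, again because $\nu>4$.
\end{enumerate}

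The only delicate point is the tail sum in region (iii). Convergence of $\sum_j 2^{j(4-\nu)}$ there, and of $\sum_z k(x,z)^2$ in the case $R\leq1$, is exactly where $\nu>4$ is needed. I also need to be careful that only the upper volume bound is used, which is all the shell estimate requires. The remaining steps are routine geometric sums.
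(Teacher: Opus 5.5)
Your proof is correct and follows the paper's argument: dyadic shells around a base point for the kernel-mass bound, and for the convolution a small-$R$ case handled by comparing $k(y,\cdot)$ with $k(x,\cdot)$, followed by the near-$x$ / near-$y$ / far decomposition. The only difference is in the far region. There you bound $k(x,z)k(y,z)\leq k(x,z)^2+k(y,z)^2$ and sum the tail of $k^2$ directly, whereas the paper splits further at $d_G(x,z)=2R$. Your version is slightly shorter and uses $\nu>4$ in the same way.
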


\begin{proof}
We first prove Inequality~\eqref{eq:kernel-mass}. Decompose $V$ into
the dyadic annuli
\begin{equation*}
  \mathcal A_j
  :=
  \left\{
    z\in V:
    2^j\leq1+d_G(x,z)<2^{j+1}
  \right\},
  \qquad j\geq0.
\end{equation*}
By the volume-growth assumption \eqref{eq:intro-volume-growth}, $|\mathcal A_j|\leq C2^{j\nu}.$\\
On $\mathcal A_j$, the polynomial weight is at most
$2^{j(2-\nu)}$, while, after modifying the constants to include
$j=0$, for some $c_1>0$,
\begin{equation*}
  e^{-c\sqrt q\,d_G(x,z)}
  \leq
  Ce^{-c_1\sqrt q\,2^j}
\end{equation*}
Therefore, it results Inequality~\eqref{eq:kernel-mass}:
$$
  \sum_{z\in V}
  k(x,z)e^{-c\sqrt q\,d_G(x,z)} \leq
  C\sum_{j\geq0}
  2^{j\nu}2^{j(2-\nu)}
  e^{-c_1\sqrt q\,2^j} =
  C\sum_{j\geq0}
  2^{2j}e^{-c_1\sqrt q\,2^j} \leq
  \frac{C}{q}.
$$
We now prove Inequality~\eqref{eq:polynomial-convolution}. Put $ R:=d_G(x,y).$ If $R\leq2$, a dyadic decomposition around $x$, together with
$d_G(y,z)\geq d_G(x,z)-2$, gives because $\nu>4$:
\begin{equation*}
  \sum_{z\in V}k(x,z)k(y,z)\leq C,
\end{equation*}
After increasing $C$, this is the desired estimate
when $R\leq2$.\\
Assume now that $R>2$. First consider the region
$d_G(x,z)\leq R/2$. The triangle inequality gives
$d_G(y,z)\geq R/2$, and hence
\begin{equation*}
  k(y,z)\leq CR^{2-\nu}.
\end{equation*}
Another dyadic decomposition and
\eqref{eq:intro-volume-growth} give
\begin{equation*}
  \sum_{d_G(x,z)\leq R/2}k(x,z)\leq CR^2.
\quad \text{Consequently,}\quad 
  \sum_{d_G(x,z)\leq R/2}k(x,z)k(y,z)
  \leq
  CR^{4-\nu}.
\end{equation*}
The same argument applies to the region
$d_G(y,z)\leq R/2$. It remains to consider the region where
\begin{equation*}
  d_G(x,z)>R/2
  \qquad\text{and}\qquad
  d_G(y,z)>R/2.
\end{equation*}
On the part where $d_G(x,z)\leq2R$, both weights are at most
$CR^{2-\nu}$, and the number of vertices is at most $CR^\nu$.
Its contribution is therefore bounded by $CR^{4-\nu}$.\\
Finally, if $d_G(x,z)>2R$, then, we find:
\begin{equation*}
  d_G(y,z)
  \geq
  d_G(x,z)-R
  \geq
  \frac12d_G(x,z).
\end{equation*}
A final dyadic decomposition yields because $\nu>4$:
$$
  \sum_{d_G(x,z)>2R}k(x,z)k(y,z)
  \leq
  C\sum_{j\geq0}
  (2^jR)^\nu(2^jR)^{4-2\nu}=
  CR^{4-\nu}
  \sum_{j\geq0}2^{j(4-\nu)} \leq
  CR^{4-\nu}.
$$
Combining the preceding regions proves
Inequality~\eqref{eq:polynomial-convolution}.
\end{proof}

\begin{proof}[Proof of Theorem~\ref{thm:intro-gaussian}]
Put $ r:=d_G(x,y).$
We first estimate the denominator in
Equality~\eqref{eq:resolvent-hitting-ratio}. The event that the
rate-one Poisson clock has no event before time $t$ gives
\begin{equation*}
 \forall y \in V\,,\quad  P_t(y,y)\geq e^{-t}.
\end{equation*}
Hence, let $c_0>0$ be an absolute constant, for $0<q\leq1$, 
\begin{equation}\label{eq:diagonal-resolvent-lower}
  G_q(y,y)
  \geq
  \int_0^1e^{-(q+1)t}\dd t
  \geq
  c_0,
\end{equation}
We next estimate $G_q(x,y)$. The equilibrium term in
\eqref{eq:intro-gaussian-bound} gives
\begin{equation*}
  \int_0^\infty e^{-qt}\frac{C_H}{n}\dd t
  =
  \frac{C_H}{qn}.
\end{equation*}
Suppose first that $r\geq1$. With the change of variable
$s=1+t$ and the bound $e^q\leq e$, we obtain
$$
  \int_0^\infty
  e^{-qt}(1+t)^{-\nu/2}
  \exp\left(
    -c_H\frac{r^2}{1+t}
  \right)\dd t \leq
  e\int_1^\infty
  s^{-\nu/2}
  \exp\left(
    -qs-c_H\frac{r^2}{s}
  \right)\dd s.
$$
Moreover, we have $ qs+c_H\frac{r^2}{s}
  \geq
  \sqrt{2c_Hq}\,r
  +
  \frac{c_H}{2}\frac{r^2}{s}.$
It follows that, for some $c>0$,
\begin{align*}
  \int_0^\infty
  e^{-qt}(1+t)^{-\nu/2}
  \exp\left(
    -c_H\frac{r^2}{1+t}
  \right)\dd t
  &\leq
  Ce^{-c\sqrt q\,r}
  \int_0^\infty
  s^{-\nu/2}
  \exp\left(
    -\frac{c_Hr^2}{2s}
  \right)\dd s\\
  &\leq
  Cr^{2-\nu}e^{-c\sqrt q\,r}.
\end{align*}
When $r=0$, the same integral is bounded by
\begin{equation*}
  \int_0^\infty(1+t)^{-\nu/2}\dd t<\infty.
\end{equation*}
Thus, after increasing $C$, we find:
\begin{equation*}
  G_q(x,y)
  \leq
  C\left[
    (1+r)^{2-\nu}e^{-c\sqrt q\,r}
    +
    \frac{1}{qn}
  \right].
\end{equation*}
Combining this estimate with
\eqref{eq:diagonal-resolvent-lower} and
Equality~\eqref{eq:resolvent-hitting-ratio} proves
Inequality~\eqref{eq:intro-hitting-gaussian}.\\
Define $  a_q(x,z)
  :=
  (1+d_G(x,z))^{2-\nu}
  e^{-c\sqrt q\,d_G(x,z)}$ and $ b_q:=\frac{1}{qn}.$
By Inequalities~\eqref{eq:intro-connectivity} and
\eqref{eq:intro-hitting-gaussian}, one finds
\begin{equation*}
  \Pp(x\leftrightarrow_q y)
  \leq
  C\sum_{z\in V}
  (a_q(x,z)+b_q)(a_q(y,z)+b_q).
\end{equation*}
By the triangle inequality, $d_G(x,z)+d_G(y,z)\geq d_G(x,y).$ Consequently, Inequality~\eqref{eq:polynomial-convolution} gives
$$
  \sum_{z\in V}a_q(x,z)a_q(y,z)
  \leq
  e^{-c\sqrt q\,d_G(x,y)}
  \sum_{z\in V}k(x,z)k(y,z)
  \leq
  C(1+d_G(x,y))^{4-\nu}
  e^{-c\sqrt q\,d_G(x,y)}.
$$
Moreover, Inequality~\eqref{eq:kernel-mass} gives
$$
  b_q\sum_{z\in V}a_q(x,z)
  +
  b_q\sum_{z\in V}a_q(y,z)
  +
  nb_q^2\leq
  \frac{C}{q^2n}.
$$
Combining these estimates proves
Inequality~\eqref{eq:intro-connectivity-gaussian}.\\
Finally, another dyadic decomposition based on
\eqref{eq:intro-volume-growth} gives
\begin{equation*}
  \sum_{y\in V}
  (1+d_G(x,y))^{4-\nu}
  e^{-c\sqrt q\,d_G(x,y)}
  \leq
  \frac{C}{q^2}.
\end{equation*}
Summing Inequality~\eqref{eq:intro-connectivity-gaussian} over
$y\in V$ and using $  \sum_{y\in V}\frac{1}{q^2n}
  =
  \frac{1}{q^2}$
proves Inequality~\eqref{eq:intro-component-gaussian}.
\end{proof}

\begin{proof}[Proof of Proposition~\ref{prop:components}]
By definition,
\begin{equation*}
  \pi(C_q(x))
  =
  \sum_{y\in V}\pi(y)
  \1_{\{x\leftrightarrow_q y\}}.
\end{equation*}
Taking expectations and applying
Inequality~\eqref{eq:intro-connectivity}, we obtain Inequality~\eqref{eq:mean-component} :
\begin{align*}
  \Ee[\pi(C_q(x))]
  &=
  \sum_{y\in V}
  \pi(y)\Pp(x\leftrightarrow_q y)\\
  &\leq
  \sum_{y\in V}\pi(y)
  \sum_{z\in V}h_q(x,z)h_q(y,z)\\
  &=
  \sum_{z\in V}h_q(x,z)
  \sum_{y\in V}\pi(y)h_q(y,z).
\end{align*}
Summing the same connectivity estimate against
$\pi(x)\pi(y)$ gives Inequality~\eqref{eq:susceptibility}:
$$
  \chi_q
  =
  \sum_{x,y\in V}
  \pi(x)\pi(y)\Pp(x\leftrightarrow_q y) \leq
  \sum_{z\in V}
  \left(
    \sum_{x\in V}\pi(x)h_q(x,z)
  \right)^2.
$$
For every realization of the forest,
\begin{equation*}
  M_q^2
  \leq
  \sum_{C\text{ component}}\pi(C)^2
\qquad \text{and}
\qquad    \sum_{C\text{ component}}\pi(C)^2
  =
  \sum_{x,y\in V}
  \pi(x)\pi(y)
  \1_{\{x\leftrightarrow_q y\}}.
\end{equation*}
Taking expectations gives
\begin{equation*}
  \Ee\left[
    \sum_{C\text{ component}}\pi(C)^2
  \right]
  =
  \chi_q.
\end{equation*}
Therefore, Markov's inequality gives
\begin{equation*}
  \Pp(M_q\geq a)
  \leq \frac{\chi_q}{a^2}.
\end{equation*}
Combining this estimate with
Inequality~\eqref{eq:susceptibility} proves
Inequality~\eqref{eq:largest-component}.\\
Finally, since we have:
\begin{equation*}
  \{C_q(x)\cap U\neq\emptyset\}
  \subset
  \bigcup_{y\in U}\{x\leftrightarrow_q y\}.
\end{equation*}
The union bound and Inequality~\eqref{eq:intro-connectivity} give Inequality~\eqref{eq:component-hits-set}:
\begin{equation*}
  \Pp(C_q(x)\cap U\neq\emptyset)
  \leq
  \sum_{y\in U}
  \sum_{z\in V}h_q(x,z)h_q(y,z).
\end{equation*}
\end{proof}

\begin{proof}[Proof of Proposition~\ref{prop:spatial}]
By the definition of $R_x^{\mathrm{raw}}$, we have:
\begin{equation*}
  \pi(R_x^{\mathrm{raw}})
  =
  \sum_{z\in V}
  \pi(z)\1_{\{\tau_z\leq T_q\}}.
\end{equation*}
Taking expectations gives Equality~\eqref{eq:raw-range}:
$$
  \Ee[\pi(R_x^{\mathrm{raw}})]
  =
  \sum_{z\in V}
  \pi(z)\Pp_x(\tau_z\leq T_q)=
  \sum_{z\in V}\pi(z)h_q(x,z),
$$
Since
$L_x\subset R_x^{\mathrm{raw}}$, taking expectations proves
Inequality~\eqref{eq:branch-size}. \\
If $\operatorname{rad}(L_x)>R$, then the raw trajectory started from
$x$ exits $B(x,R)$ before it is killed. Therefore,
\begin{equation*}
  \Pp(\operatorname{rad}(L_x)>R)
  \leq
  \Pp_x(\tau_{B(x,R)^c}\leq T_q)
  =
  h_q(x,B(x,R)^c),
\end{equation*}
which proves Inequality~\eqref{eq:radius}. Applying
Equality~\eqref{eq:intro-hitting-identity} with
$U=B(x,R)^c$ proves Inequality~\eqref{eq:radius-es}.\\
Let $J_t$ be the number of events of the rate-one Poisson clock before
time $t$. The loop-erased branch has at most $J_{T_q}$ edges.
Conditioning on $T_q$ gives, for every integer $m\geq0$,
$$
  \Pp(J_{T_q}=m)
  =
  \int_0^\infty
  qe^{-qt}e^{-t}\frac{t^m}{m!}\dd t=
  \frac{q}{(1+q)^{m+1}}.
$$
Consequently, it follows:
\begin{equation*}
  \Pp(J_{T_q}\geq m)
  =
  \left(\frac{1}{1+q}\right)^m.
\end{equation*}
Since $\operatorname{depth}_q(x)\leq J_{T_q}$, this proves
Inequality~\eqref{eq:depth-tail}. Moreover, we find Inequality~\eqref{eq:mean-depth}:
$$
  \Ee[\operatorname{depth}_q(x)]
  =
  \sum_{m\geq1}
  \Pp(\operatorname{depth}_q(x)\geq m) \leq
  \sum_{m\geq1}
  \left(\frac{1}{1+q}\right)^m =
  \frac{1}{q},
$$
Suppose that $\operatorname{diam}(C_q(x))\geq R$. Then there exist
$u,v\in C_q(x)$ such that $d(u,v)\geq R$. Since
\begin{equation*}
  R
  \leq
  d(u,v)
  \leq
  d(u,x)+d(x,v),
\end{equation*}
at least one of $u$ and $v$ is at distance at least $R/2$ from $x$.
Consequently,
\begin{equation*}
  \{\operatorname{diam}(C_q(x))\geq R\}
  \subset
  \bigcup_{\substack{y\in V\\d(x,y)\geq R/2}}
  \{x\leftrightarrow_q y\}.
\end{equation*}
The union bound and Inequality~\eqref{eq:intro-connectivity} prove
Inequality~\eqref{eq:local-diameter}.\\
Finally, if some component has diameter at least $R$, then there exist
$x,y\in V$ such that $d(x,y)\geq R$ and
$x\leftrightarrow_q y$. Hence, we get:
\begin{equation*}
  \left\{
    \exists C:\operatorname{diam}(C)\geq R
  \right\}
  \subset
  \bigcup_{\substack{x,y\in V\\d(x,y)\geq R}}
  \{x\leftrightarrow_q y\}.
\end{equation*}
Another union bound and
Inequality~\eqref{eq:intro-connectivity} prove
Inequality~\eqref{eq:global-diameter}.
\end{proof}

\subsection{Proof of the escape-profile bound}
\begin{proof}[Proof of Theorem~\ref{thm:escape-profile}]
We prove Equality~\eqref{eq:escape-bound}. Put $D_U:=V\setminus U$ and define for $t\geq 0,$ \\ $  m(t)
  :=
  \Ee^{D_U}_{\{x\}}
  [\pi(S_t^{D_U})]$ where $x\in D_U$. We first compute the evolution of $m$. Conditionally on
$S_n^{D_U}=A$, the definition of the punctured evolving set in
Equality~\eqref{eq:punctured-evolving-set-definition} gives for all $n\geq 0$
\begin{align*}
  \Ee\left[
    \pi(S_{n+1}^{D_U})
    \mid S_n^{D_U}=A
  \right]
  &=
  \sum_{y\in D_U}
  \pi(y)
  \Pp(y\in S_{n+1}^{D_U}\mid S_n^{D_U}=A)\\
  &=
  \sum_{y\in D_U}Q_{D_U}(A,y) =
  Q(A,D_U) =
  \pi(A)-Q(A,U).
\end{align*}
Since the continuous-time evolving set is obtained by rate-one
Poissonization, its generator is the discrete-time transition
operator minus the identity. Consequently, one finds:
\begin{equation}\label{eq:escape-mass-derivative}
 \forall t\geq 0\,,\qquad m'(t)
  =
  -\Ee^{D_U}_{\{x\}}
  [Q(S_t^{D_U},U)].
\end{equation}
By Definition~\eqref{eq:escape-profile-definition} and the fact that
$\widehat\Psi_U$ majorizes $\Psi_U$, for every $A\subset D_U$,
\begin{equation*}
  Q(A,U)
  \leq
  \Psi_U(\pi(A))
  \leq
  \widehat\Psi_U(\pi(A)).
\end{equation*}
It follows from Equality~\eqref{eq:escape-mass-derivative} that
\begin{equation*}
  \forall t\geq 0\,,\qquad m'(t)
  \geq
  -\Ee^{D_U}_{\{x\}}
  \left[
    \widehat\Psi_U(\pi(S_t^{D_U}))
  \right].
\end{equation*}
Since $\widehat\Psi_U$ is concave, Jensen's inequality gives
\begin{equation*}
  \forall t\geq 0\,,\qquad  \Ee^{D_U}_{\{x\}}
  \left[
    \widehat\Psi_U(\pi(S_t^{D_U}))
  \right]
  \leq
  \widehat\Psi_U(m(t)).
\end{equation*}
Therefore, we find:
\begin{equation*}
  \forall t \geq  0\,,\quad m'(t)
  \geq
  -\widehat\Psi_U(m(t)),
  \qquad
  m(0)=\pi(x).
\end{equation*}
Comparing this differential inequality with
Equality~\eqref{eq:escape-comparison-ode} yields
\begin{equation*}
  m(t)\geq u_{x,U}(t)
  \qquad\text{for every }t\geq0.
\end{equation*}
Finally, the exit projection \eqref{eq:exit-projection} and the
exponential-time averaging established earlier give
\begin{align*}
  h_q(x,U)
  &=
  1-\frac{1}{\pi(x)}
  \Ee^{D_U}_{\{x\}}
  [\pi(S_{T_q}^{D_U})] =
  1-\frac{q}{\pi(x)}
  \int_0^\infty e^{-qt}m(t)\dd t\\
  &\leq
  1-\frac{q}{\pi(x)}
  \int_0^\infty e^{-qt}u_{x,U}(t)\dd t =
  H_q(x,U).
\end{align*}
This proves Equality~\eqref{eq:escape-bound}.
\end{proof}

\begin{proof}[Proof of Corollary~\ref{cor:linear-escape}]
Put $D_U:=V\setminus U$ and define $ m(t)
  :=
  \Ee^{D_U}_{\{x\}}
  [\pi(S_t^{D_U})].$ By Definition~\eqref{eq:linear-escape-rate}, for every
$A\subset D_U$,
$$
  Q(A,U) =
  \sum_{a\in A}\pi(a)P(a,U)\leq
  \lambda_U\pi(A).
$$
Using Equality~\eqref{eq:escape-mass-derivative}, we obtain for all $t\geq 0:$
$$
  m'(t)
  =
  -\Ee^{D_U}_{\{x\}}
  [Q(S_t^{D_U},U)]\geq
  -\lambda_U
  \Ee^{D_U}_{\{x\}}
  [\pi(S_t^{D_U})] =
  -\lambda_U m(t).
$$
Since $m(0)=\pi(x)$, Gronwall's inequality gives
\begin{equation*}
  m(t)
  \geq
  \pi(x)e^{-\lambda_Ut}.
\end{equation*}
The exit projection \eqref{eq:exit-projection} and exponential-time
averaging now yield
\begin{align*}
  h_q(x,U)
  &=
  1-\frac{q}{\pi(x)}
  \int_0^\infty e^{-qt}m(t)\dd t\\
  &\leq
  1-q\int_0^\infty e^{-(q+\lambda_U)t}\dd t =
  \frac{\lambda_U}{q+\lambda_U}.
\end{align*}
This proves Equality~\eqref{eq:linear-escape-bound}.
\end{proof}
\newpage
\section{Evolving sets and the resolvent}\label{sec:resolvent}

\subsection{The heat-kernel shadow}

For the ordinary evolving set $(S_t^x)_{t\geq0}$ started from $\{x\}$,
Lemma~\ref{lem:punctured-intertwining} in the whole graph gives
\begin{equation}\label{eq:heat-shadow}
  \forall t\geq 0\,,\forall x,y  \in V\,,\quad\Pp_{\{x\}}(y\in S_t^x)
  =\frac{\pi(x)}{\pi(y)}P_t(x,y).
\end{equation}
Equivalently, for every $f:V\to\mathbb R$, it follows:
\begin{equation*}
  \forall t\geq 0\,,\quad\frac{1}{\pi(x)}
  \Ee_{\{x\}}\left[\sum_{y\in S_t^x}\pi(y)f(y)\right]
  =P_tf(x).
\end{equation*}
The evolving set is not itself a heat kernel, since it is a random subset of $V$.  However, its one-point inclusion probabilities recover the heat kernel after the reversible change of density see Equality (\ref{eq:heat-shadow}).
In this sense, the pointwise shadow of the evolving set is the heat kernel.

\subsection{Averaging at an exponential time}

Averaging Equality \eqref{eq:heat-shadow} at $T_q$ gives for all $x,y$ in $V$,
\begin{equation*}
  \Pp_{\{x\}}(y\in S_{T_q}^x)
  =\frac{\pi(x)}{\pi(y)}q(qI+L)^{-1}(x,y).
\end{equation*}
In functional form, we have for every $f:V\to\mathbb R$:
\begin{equation*}
  \forall x \in V\,,\quad\frac{1}{\pi(x)}
  \Ee_{\{x\}}\left[\sum_{y\in S_{T_q}^x}\pi(y)f(y)\right]
  =q(qI+L)^{-1}f(x).
\end{equation*}
We say that the evolving set at exponential time is a set-valued lift of the resolvent.\\
Define $K_q:=q(qI+L)^{-1}.$ Then, for all  $x,y$ in $V$, we have:
\begin{equation*}
  K_q(x,y)
  =\frac{\pi(y)}{\pi(x)}\Pp_{\{x\}}(y\in S_{T_q}),
\end{equation*}
and the root law becomes
\begin{equation*}
  \Pp(\rho_q(x)=y)=K_q(x,y).
\quad \text{In particular,}\quad 
  \Pp(x\in\Root)=K_q(x,x).
\end{equation*}
In the classical determinantal formulation, we obtain:
\begin{equation*}
  \Pp(A\subset\Root)=\det(K_q(a,b))_{a,b\in A}.
\end{equation*}
Proposition~\ref{prop:determinantal-comparison} shows that the punctured product is the
successive Schur-complement factorization of this determinant. The paper does not use the
determinant to construct the factors. It uses Wilson's algorithm to identify the domains
and evolving sets to interpret every pivot probabilistically. The component estimates in
Section~\ref{sec:hitting} then use the other projection of the same object and are not
consequences of the determinantal law of $\Root$.

\subsection{Punctured resolvents and hitting}

For $D\subset V$, recall the Dirichlet operator $L_D$ defined in
Equality~\eqref{eq:dirichlet-generator-definition}. Then,
\begin{equation}\label{eq:punctured-resolvent}
  \forall x,y\in D\,,\qquad
  \Pp^D_{\{x\}}(y\in S_{T_q}^D)
  =
  \frac{\pi(x)}{\pi(y)}
  q(qI+L_D)^{-1}(x,y).
\end{equation}
Summing over $y\in D$ gives the following
\begin{equation*}
  \frac{1}{\pi(x)}
  \Ee^D_{\{x\}}[\pi(S_{T_q}^D)]
  =q(qI+L_D)^{-1}\1_D(x).
\end{equation*}
Therefore, we find :
\begin{equation*}
  \Pp_x(\tau_{D^c}\leq T_q)
  =1-q(qI+L_D)^{-1}\1_D(x).
\end{equation*}

\subsection{Occupation and exploration time}
For $A\subset V$, define for $x$ in $V$
\begin{equation*}
  O_q(x,A)
  :=\Ee_x\left[\int_0^{T_q}\1_{\{X_t\in A\}}\dd t\right].
\end{equation*}
Then, the following equality is true:
\begin{equation}\label{eq:occupation-es}
  O_q(x,A)
  =
  \frac{1}{\pi(x)}
  \int_0^\infty e^{-qt}
  \Ee_{\{x\}}\left[\pi(S_t\cap A)\right]\dd t.
\end{equation}
Indeed, by Tonelli's theorem and the independence of $T_q$,
$$
  O_q(x,A)
  =
  \int_0^\infty
  \Pp_x\bigl(t<T_q,\ X_t\in A\bigr)\dd t =
  \int_0^\infty e^{-qt}\Pp_x(X_t\in A)\dd t.
$$
On the other hand, the singleton intertwining, see Equality
\eqref{eq:singleton-intertwining} gives for all $t\geq 0$:
$$
  \Ee_{\{x\}}\left[\pi(S_t\cap A)\right]
  =
  \sum_{y\in A}\pi(y)\Pp_{\{x\}}(y\in S_t) 
  =
  \pi(x)\sum_{y\in A}P_t(x,y) 
  =
  \pi(x)\Pp_x(X_t\in A).
$$
Substituting this identity into the preceding integral proves Equality
\eqref{eq:occupation-es}.\\
Let $D\subset V$, $a\in D$, and put $\sigma_D:=T_q\wedge\tau_{D^c}.$ Then, it comes:
\begin{equation*}
  \Ee_a[\sigma_D]
  =\frac{1}{\pi(a)}
  \int_0^\infty e^{-qt}
  \Ee^D_{\{a\}}[\pi(S_t^D)]\dd t.
\end{equation*}
This is the mean time spent by a Wilson walk in $D$ before it is killed or exits the
domain.

\paragraph*{Acknowledgements.} The author thanks S\'ebastien Darses and Erwan Hillion for helpful discussions and comments.

\end{document}